\newtheorem{teo}{Theorem}
\newtheorem{lemma}{Lemma}
\newtheorem{prop}{Proposition}
\newtheorem{defin}{Definition}
\newtheorem{cor}{Corollary}
\newtheorem{remark}{Remark}
\DeclareMathOperator{\Tr}{Tr}
\DeclareMathOperator{\graph}{graph}
\begin{document}

\title{On unbounded motions in a real analytic bouncing ball problem}
\author{Stefano Mar\`o\thanks{
		Department of Mathematics, Faculty of Science, University of Oviedo,
		Oviedo, Spain. E-mail: 
		\texttt{marostefano@uniovi.es}}}
\date{}
\maketitle

\begin{abstract}
We consider the model of a ball elastically bouncing on a racket moving in the vertical direction according to a given periodic function $f(t)$. The gravity force is acting on the ball. We prove that if the function $f(t)$ belongs to a class of trigonometric polynomials of degree $2$ then there exists a one dimensional continuum of initial conditions for which the velocity of the ball tends to infinity. Our result improves a previous one by Pustyl'nikov and gives a new upper bound to the applicability of KAM theory to this model. 
  \end{abstract}

\section{Introduction}
The vertical dynamics of a free falling ball on a moving racket is considered. The racket is supposed to
move in the vertical direction according to a periodic function $f(t)$ and
the ball is reflected according to the law of elastic bouncing when hitting the
racket. The only force acting on the ball is the gravity, with acceleration $g$. Moreover, the mass of the racket is assumed to be large with respect to the mass of the
ball so that the impacts do not affect the motion of the racket.

\noindent This model has inspired many authors as it represents a simple mechanical model exhibiting complex dynamics; see for example \cite{holmes,kunzeortega2,maro3,maro2,maro5,pust,qiantorres,ruiztorres} where results on periodic or quasiperiodic motions are proved together with, in some case, topological chaos. Moreover, for some $f$ presenting some singularities it is possible to study statistical and ergodic properties \cite{dolgo,studolgo}. 

In this paper we are concerned with the existence of unbounded motions, supposing $f$ real analytic. We understand that a motion is unbounded if the velocity of the ball tends to infinity. \\
The first result in this direction is due to Pustyl'nikov assuming that $2\dot{f}(t_0)=g$ for some $t_0$. We remark that with the same proof, it is possible to weaken the condition to 
\begin{equation}\label{pustopt}
\max\dot{f}\geq \frac{g}{4}.
\end{equation}       

\noindent On the other hand, an application of KAM theory implies that there exists $\epsilon>0$ (depending on $g$) such that if the  norm of $\dot{f}$ is smaller than $\epsilon$ then all motions are bounded \cite{siegelmoser}. It is then natural to ask, in a broad sense, what is the optimal condition on $\dot{f}$ for which all motions are bounded. In this respect, condition \eqref{pustopt} represents an upper bound to the norm of $\dot{f}$. 

 \noindent The question is in its generality hard to solve. Actually, it has already attracted a lot of attention for the case of the standard map \cite{ll,mackay}, being the problem still open.   

The purpose of this paper is to improve the upper bound given by \eqref{pustopt}, more precisely, we give an explicit example of a trigonometric polynomial $p(t)$ with

\begin{equation}
	\max\dot{p}< \frac{g}{4} 
\end{equation}  
admitting unbounded bouncing motions.

 We also stress that regularity plays a role, being KAM theory not applicable with too low regularity \cite{herman1, herman2}. Hence our example gives a new upper bound in the analytic class. On this line, we also cite the result in \cite{maro4} where functions with arbitrary small first derivative are constructed admitting unbounded bouncing motions. See also \cite{ma_xu} for a $C^{1+\alpha}$ extension of this result. In any case, these functions are not $C^2$ and it is not clear if a smoothing procedure can be applied. 

The idea behind Pustyl'nikov's result is to construct a motion such that the time difference between two consecutive bounces is a positive integer. Hence, since $f$ is one periodic, the velocity increases of a fixed quantity at every bounce. Moreover, condition \eqref{pustopt} is optimal to get such motions. Hence, we will construct the trigonometric polynomial $p(t)$ such that in the corresponding bouncing motion the increase of the velocity occurs every $N$ bounces, for some $N\geq 2$. This idea was already used in \cite{maro4} and we will show that it can be used to construct also real analytic examples.

Finally, we stress that the trigonometric polynomial $p(t)$ admits a continuum of unbounded orbits and moreover belongs to a family of trigonometric polynomial $p_s(t)$, parametrized by $s$ in a real open interval $I$, with
\begin{equation}
	\max\dot{p}_s< \frac{g}{4} \quad\mbox{for all } s\in I, 
\end{equation}  
each of them admitting a continuum of unbounded motions. 

The existence of a continuum of unbounded motions was already presented in the results of Pustyl'nikov. Here we extend his idea, which we recall here. The bouncing motions correspond to (positive) orbits of a symplectic map of the cylinder $P_f$. The map $P_f$ shares some orbits with a map of the torus and the unbounded orbit constructed by Pustyl'nikov can be seen as a fixed point of this latter map. A condition on $\ddot{f}$ guarantees that this fixed point is hyperbolic and the stable manifold exists. Coming back to the original map, the stable manifold is preserved and allows to construct the desired continuum of initial conditions. We stress that the linearization of $P_f$ along the unbounded orbit is a non autonomous map, so that the just cited result of preservation of the invariant manifold is delicate.

In our case, the unbounded orbit corresponds to a $N$-cycle of the approximating map, and we will show that it is hyperbolic due to the properties of the family $p_s$. Hence we will find the continuum of unbounded orbits showing that we can adapt Pustyl'nikov idea to the $N$-th iterate of the map $P_p$.
         
The paper is organized as follows. In Section \ref{sec:state} we state the problem, recall Pustyl'nikov's result and state our main result. In Section \ref{sec:unb} we discuss the optimality of condition \eqref{pustopt} and construct the family $p_s$ proving that the corresponding map admits one unbounded orbit. Finally, in Section \ref{sec:contunb} we show how to extend the unbounded orbit found in the previous Section to a continuum. In the same Section we also give a detailed discussion of the result on the persistence of the stable manifold under non autonomous perturbations.

\section{Statement of the problem and main result}\label{sec:state}

We consider the vertical motions of a ball bouncing elastically on a vertically moving racket. The only force acting on the ball is the gravity, with acceleration $g>0$ and we assume that the impacts do not affect the motion of the racket that is supposed to move like a $1$-periodic real analytic function $f$. In an inertial frame and choosing as coordinates the time of impact $t$ and the velocity $v$ just after the impact we get that the motion is described by the following map (see \cite{maro5})    
\begin{equation}\label{unb}
  P_f:
  \left\{
	\begin{array}{ll}
		t_1=t_{0}+\frac{2}{g}v_{0}-\frac{2}{g}f[t_1,t_{0}]
		\\
		v_1=v_{0}+2\dot{f}(t_1)-2f[t_1,t_{0}]
	\end{array}
        \right.
\end{equation}
where 
$$
f[t_1,t_{0}]=\frac{f(t_1)-f(t_0)}{t_1-t_0}.
$$
 This is also the formulation considered by Pustil'nikov in \cite{pust}. Another approach based on differential equations was considered by Kunze and Ortega \cite{kunzeortega2} and leads to a map that is equivalent to (\ref{unb}). The map is implicit and turns out to be an embedding for $v>\bar{v}$ for some $\bar{v}$ sufficiently large (see \cite{maro5}). Moreover, by the periodicity of the function $f$, the coordinate $t$ can be seen as an angle. Hence the map $P_f$ is defined on the half cylinder $\mathbb{T}\times (\bar{v},+\infty)$, where $\mathbb{T}=\mathbb{R/\mathbb{Z}}$.
 
 A bouncing motion is a positive orbit of $P_f$ i.e. a sequence $(t_n,v_n)_{n\geq 0}$ such that $(t_n,v_n)=P_f^n(t_0,v_0)$ for every $n\geq 0$. We are interested in the existence of unbounded bouncing motions, where
 \begin{defin}
 	A bouncing motion $(t_n,v_n)_{n\geq 0}$ is said unbounded if
\[
\lim_{n\to +\infty}v_n =+\infty.
\] 
\end{defin}

 The first result on the existence of an unbounded bouncing motion is due to Pustyl'nikov
 \begin{teo}[\cite{pust}]\label{pusteo}
	Suppose that $2\dot{f}(t_0^*)=g$. Then there exists a positive integer $m$ such that choosing $v_0^*=mg/2$, the bouncing motion given by $P_f^n(t^*_0,v_0^*)$ satisfies for $n\geq 0$,
\begin{equation}
	t_{n+1}^*-t_n^*=m+2n, \qquad v_{n+1}^*-v_n^*=g.
\end{equation}
If in addition $\ddot{f}(t_0^*)>0$ or $\ddot{f}(t_0^*)<-g$ then there exists a one-dimensional continuum  $\Gamma\subset\mathbb{T}\times\mathbb{R}$ of initial data containing $(t_0^*,v_0^*)$ such that if $(t,v)\in\Gamma$ then bouncing motion given by $P_f^n(t,v)$ is unbounded and
\[
\lim_{n\to +\infty}\frac{v_n}{n}=g.
\]  
 \end{teo}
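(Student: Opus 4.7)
The plan is to first exhibit the candidate orbit explicitly, then linearise $P_f$ along it to get a hyperbolic cocycle at infinity, and finally invoke a non-autonomous stable manifold result to produce $\Gamma$.

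For the first part, I would set
\[
t_n^* = t_0^* + n(m+n-1), \qquad v_n^* = \frac{g}{2}(m+2n),
\]
where $m$ is the smallest positive integer with $v_0^* = mg/2 > \bar v$, so that $P_f$ is well-defined along the sequence. Since $t_{n+1}^* - t_n^* = m+2n$ is a positive integer and $f$ is $1$-periodic, one has $f[t_{n+1}^*, t_n^*] = 0$ and $\dot f(t_{n+1}^*) = \dot f(t_0^*) = g/2$. Substituting into \eqref{unb} turns both equations into trivial identities, confirming $P_f^n(t_0^*, v_0^*) = (t_n^*, v_n^*)$ with the stated increments $t_{n+1}^* - t_n^* = m + 2n$ and $v_{n+1}^* - v_n^* = g$.

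For the continuum, I would linearise $P_f$ at $(t_n^*, v_n^*)$. Implicit differentiation of the first equation of \eqref{unb} at the reference orbit gives, with $\tau_n := m + 2n$,
\[
\frac{\partial t_1}{\partial t_0} = 1, \qquad \frac{\partial t_1}{\partial v_0} = \frac{2\tau_n}{g(\tau_n+1)},
\]
while the second equation yields $\partial v_1/\partial t_0 = 2\ddot f(t_0^*)$ and $\partial v_1/\partial v_0 = 1 + \bigl(2\ddot f(t_0^*) - g/\tau_n\bigr)\cdot 2\tau_n/(g(\tau_n+1))$. As $n \to \infty$ the Jacobian $A_n := DP_f(t_n^*, v_n^*)$ converges to
\[
A_\infty = \begin{pmatrix} 1 & 2/g \\ 2\ddot f(t_0^*) & 1 + 4\ddot f(t_0^*)/g \end{pmatrix},
\]
with $\det A_\infty = 1$ and $\mathrm{tr}\, A_\infty = 2 + 4\ddot f(t_0^*)/g$. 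The hypothesis $\ddot f(t_0^*) > 0$ or $\ddot f(t_0^*) < -g$ is precisely $|\mathrm{tr}\, A_\infty| > 2$, so $A_\infty$ is hyperbolic with real eigenvalues $0 < \lambda_s < 1 < \lambda_u$.

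Because the cocycle $\{A_n\}$ converges to the hyperbolic matrix $A_\infty$ and the relevant derivatives of $P_f$ along the reference orbit are uniformly bounded in $n$ (only values of $f,\dot f,\ddot f$ at the fixed angle $t_0^* \bmod 1$ enter to leading order), a Hadamard--Perron type theorem for non-autonomous perturbations of a hyperbolic linear cocycle produces a $1$-dimensional $C^1$ local stable manifold $\Gamma\ni (t_0^*, v_0^*)$ such that for every $(t_0,v_0) \in \Gamma$ the perturbation $(t_n - t_n^*, v_n - v_n^*)$ decays exponentially. Then $v_n = v_n^* + o(1)$ gives $v_n \to +\infty$ and $v_n/n \to g$. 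The main obstacle is this last step: the reference orbit is not a fixed point of $P_f$, so the classical hyperbolic fixed-point theorem does not apply directly, and one has to control the remainders in the Taylor expansion of $P_f$ at $(t_n^*, v_n^*)$ uniformly in $n$ so that a graph-transform argument in the stable direction of $A_\infty$ converges. This is the delicate point anticipated in the introduction of the paper, and is where the choice of $m$ large enters.
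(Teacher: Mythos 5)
Your proof is correct and follows essentially the same strategy as the paper's machinery for the analogous $N\ge 2$ result: explicit orbit verification, linearization along the orbit yielding a hyperbolic matrix under the second-derivative hypothesis, and a non-autonomous stable manifold theorem (the paper's Theorem~\ref{cl}). One cosmetic difference is that you compute the exact Jacobian $A_n$, depending on $n$ via $\tau_n=m+2n$ and converging to $A_\infty$, whereas the paper's Proposition~\ref{proplin} takes $A_\infty$ as the linear part from the outset and absorbs the $O(1/v_n^*)$ corrections into the remainder $\Omega_n$; both decompositions feed equally well into the non-autonomous Lyapunov--Perron argument that the paper proves in detail.
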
 
We remark that following Pustyl'nikov proof the condition on $\dot{f}$ can be weakened to $4\dot{f}(t_0^*)=g$ being this condition optimal to get orbits with increasing velocity at every bounce. We prove these facts in Proposition \ref{pustrev} and Remark \ref{pustrem}.   
 
 The main result of this paper is to prove that there exists a continuum of unbounded bouncing motions for a class of functions not satisfying the optimal condition of Pustyl'nikov. More precisely we prove the following
 
 \begin{teo}\label{main}
 	There exists a $1$-periodic trigonometric polynomial $p(t)$ of degree $2$ such that
 	\[
 	\max \dot{p} <\frac{g}{4} 
 	\] 
 	and the map $P_p$ admits a one dimensional continuum of unbounded motions.
 \end{teo}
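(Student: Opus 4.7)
The roadmap outlined in the introduction suggests the following plan: replace Pustyl'nikov's ``velocity grows at every bounce'' with ``velocity grows once per $N$-cycle'', realize this $N$-cycle as a hyperbolic periodic orbit of a torus quotient of $P_p$, and extend the single unbounded orbit to a continuum via a stable-manifold argument applied to $P_p^N$.

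First I would pin down the weakened optimality bound $\max\dot{f}\geq g/4$ for the Pustyl'nikov-type construction: inspecting \eqref{unb}, an orbit with $t_{n+1}-t_n$ integer at every step and $v_n$ growing arithmetically forces $f[t_{n+1},t_n]=0$ and $v_{n+1}-v_n=2\dot{f}(t_{n+1})$; reorganizing yields a pointwise inequality on $\dot{f}$ whose sharpest possible value is $g/4$, as the author will establish in Section \ref{sec:unb}. The gain comes from requiring the integer time gap only every $N$ bounces: now the total velocity increment $g$ is spread over $N$ steps, so the pointwise constraint on $\dot{p}$ relaxes proportionally and leaves room for $\max\dot{p}<g/4$.

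Next I would construct the family $p_s$ by writing a generic degree-$2$ ansatz
\[
p_s(t)=a_1(s)\cos 2\pi t+b_1(s)\sin 2\pi t+a_2(s)\cos 4\pi t+b_2(s)\sin 4\pi t
\]
with a real parameter $s$, and impose the $N$-cycle equations obtained by iterating \eqref{unb}: $t_N^*\equiv t_0^*\pmod 1$, $v_N^*=v_0^*+g$, together with the $N$ bounce relations. For $N=2$ this is a small finite system with enough free coefficients to be solved explicitly while keeping $\max\dot{p}_s<g/4$ on a full open interval $s\in I$. The free parameters are then used to impose hyperbolicity of the $N$-cycle: since $P_p$ is symplectic one has $\det DP_p^N=1$ along the cycle, so hyperbolicity reduces to the single open condition $|\mathrm{tr}\,DP_p^N|>2$, which plays here the role of the $\ddot{f}(t_0^*)>0$ or $<-g$ hypothesis of Theorem \ref{pusteo}.

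The final step is to promote the single unbounded orbit to the one-dimensional continuum. A direct application of the classical stable-manifold theorem is not available because the orbit escapes in $v$ and the linearization of $P_p^N$ along it is non-autonomous. Instead I would observe that this linearization is a cocycle of symplectic $2\times 2$ matrices converging, as $n\to\infty$, to the hyperbolic model $DP_p^N$ at the $N$-cycle, and invoke a Hadamard--Perron theorem for asymptotically hyperbolic non-autonomous sequences to produce a one-dimensional local stable curve at the initial point of the unbounded orbit, consisting of initial data whose forward $P_p$-orbits shadow it. This last step is the main obstacle, as the author signals in the introduction and treats in detail in Section \ref{sec:contunb}; the algebraic construction of $p_s$, while elementary for $N=2$, also requires care to enforce $\max\dot{p}_s<g/4$ and the trace condition simultaneously on an open interval.
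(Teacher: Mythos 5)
Your roadmap matches the paper's proof essentially step by step: the weakened Pustyl'nikov bound $g/4$ via Proposition~\ref{pustrev}, the $N=2$ cycle construction with a degree-$2$ trigonometric ansatz (Proposition~\ref{oneparfam}), the reduction of hyperbolicity to $|\Tr DP_p^N|>2$ (Proposition~\ref{contu}), and a non-autonomous stable-manifold theorem applied to $P_p^N$ (Theorem~\ref{cl}). The one place you are slightly off is structural rather than logical: you describe the linearization along the escaping orbit as a cocycle merely \emph{converging} to a hyperbolic model, whereas the paper observes that because $t^*_{n+N}-t^*_n\in\mathbb{N}$ and $f$ is $1$-periodic, the matrices $A_n$ are \emph{exactly} $N$-periodic, so the $N$-fold product is a genuinely constant hyperbolic matrix $A$ and the non-autonomy sits only in the Lipschitz-small remainder $R_m$; this is what makes the constant-coefficient version of the Coddington--Levinson stable manifold theorem (Theorem~\ref{cl}) directly applicable, avoiding the extra bookkeeping of an asymptotically hyperbolic cocycle.
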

 It will come from the proof that the expression of $p(t)$ is explicit. Moreover, $p(t)$ belongs to a family of trigonometric polynomials with the same properties: 
 
 \begin{cor}\label{main2}
 	There exist a real open interval I and a one parameter family $p_s(t), s\in I$ of $1$-periodic trigonometric polynomials of degree $2$ such that
 	\begin{itemize}
 		\item[(i)] $p(t)=p_{\bar{s}}(t)$ for some $\bar{s}\in I$,
 		\item[(ii)] $ \max \dot{p}_s < g/4$ for all $s\in I$,
 		\item[(iii)] for every $s\in I$ the corresponding maps $P_{p_s}$ admit a one dimensional continuum $S_s$ such that each bouncing motion starting on $S_s$ is unbounded.
 	\end{itemize}

 \end{cor}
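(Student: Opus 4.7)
The plan is to obtain Corollary~\ref{main2} as a direct byproduct of the construction that yields Theorem~\ref{main}. In that theorem, the polynomial $p$ is built by imposing that a certain approximating map admits a hyperbolic $N$-cycle whose lift to $P_p$ is an orbit along which the velocity grows by $g$ every $N$ bounces. Since a generic degree-$2$ trigonometric polynomial has four free coefficients (the constant term being dynamically irrelevant), while the $N$-cycle and lift conditions use up all but one dimension, I expect a residual one-parameter family of solutions; the corollary amounts to identifying this parameter with $s$.

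The first step is to write out the $N$-cycle equations for the general ansatz $p_s(t)=a_1(s)\cos(2\pi t)+b_1(s)\sin(2\pi t)+a_2(s)\cos(4\pi t)+b_2(s)\sin(4\pi t)$ and solve them, leaving one coefficient free. Non degeneracy of the Jacobian of these equations at $p=p_{\bar{s}}$ is the same non degeneracy that will make the cycle hyperbolic, so the implicit function theorem gives an analytic map $s\mapsto p_s$ from an open interval $J\ni \bar{s}$ into the space of $1$-periodic real analytic functions, equipped with the $C^2$ topology.

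The second step is an openness argument. The two conditions that have to be preserved, namely $\max\dot{p}_s<g/4$ and the hyperbolicity of the $N$-cycle, are both open in the $C^2$ topology. Since they hold at $s=\bar{s}$, they persist on a smaller open interval $I\subseteq J$, yielding items (i) and (ii). For item (iii), the proof of Theorem~\ref{main} applied to $p_s$ produces the continuum $S_s$: the $N$-cycle is a hyperbolic fixed point of the $N$-th iterate of the approximating map, and the stable manifold is preserved under the non autonomous perturbation $P_{p_s}^N$, delivering a one-dimensional continuum of unbounded motions. Nothing in this last step is specific to the value $\bar{s}$.

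The main obstacle I foresee is the uniform control of hyperbolicity as $s$ varies. The linearization of $P_{p_s}^N$ at the cycle is a product of non autonomous matrices depending analytically on $s$, and the cycle position itself moves with $s$; what saves us is that the spectral condition ``no eigenvalues on the unit circle'' is open, so continuity in $s$ propagates hyperbolicity from $\bar{s}$ to a neighborhood. This is precisely what shrinks $J$ down to $I$ and determines the size of the interval appearing in the statement.
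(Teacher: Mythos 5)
Your general strategy matches the paper's: the conditions forcing the existence of a prescribed $N$-cycle are imposed on the four coefficients $a_1,b_1,a_2,b_2$, leaving one free parameter, and the two properties $\max\dot p_s<g/4$ and hyperbolicity are then propagated to a neighbourhood by continuity. But there is a genuine error in one of your key claims, and a couple of your anticipated difficulties do not arise in the actual construction.

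The error is the sentence asserting that ``non degeneracy of the Jacobian of these equations at $p=p_{\bar s}$ is the same non degeneracy that will make the cycle hyperbolic.'' These are unrelated conditions. The cycle-determining equations (conditions 1, 2', 3, 4' in Proposition \ref{oneparfam}) are constraints on the coefficient vector $(a_1,b_1,a_2,b_2)$ for fixed cycle times $t_0^*=0$, $t_1^*=5/12+k$; since $p$ depends linearly on its coefficients, this is a $3\times 4$ linear system whose solution set is an affine line parametrized by $s$ -- no implicit function theorem and no Jacobian non-degeneracy is invoked, the paper simply writes the parametric solution \eqref{coeff}. Hyperbolicity of the $2$-cycle is a completely separate requirement: it is the condition $|\Tr(A_1A_0)|>2$ on the $2\times 2$ transition matrices $A_i$, and it depends on the \emph{second} derivatives $\ddot p_s(t_0^*)$, $\ddot p_s(t_1^*)$, which are not controlled by the linear system. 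The paper must verify this trace inequality by a separate computation (it reduces to checking $\tilde p_s(t_0^*)>0$ and $\tilde p_s(t_1^*)>0$ at $s=0.006$), and then restrict to a smaller interval $\tilde I$ where it persists. If you attempted to carry out your proof as written, this step would be missing.

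Two smaller mispredictions: first, because the cycle times $t_0^*,t_1^*$ and the whole orbit $(t_n^*,v_n^*)$ are chosen once and for all, the cycle does \emph{not} move as $s$ varies -- the one-parameter family $p_s$ is precisely the set of degree-$2$ trigonometric polynomials admitting \emph{that fixed} orbit, which makes your worry about ``uniform control of hyperbolicity as the cycle position moves'' moot. Second, the velocity gain per period is $\frac{g}{2}V=\frac{g}{2}$ per $N=2$ bounces (Proposition \ref{stand2} with $V=1$), not $g$.
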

 
\section{Existence of one unbounded motion}\label{sec:unb}

In this section we are going to construct a motion of the racket in the form of a trigonometric polynomial $p(t)$ not satisfying condition \eqref{pustopt}, for which there exits an unbounded bouncing motion.\\
The main idea behind our result is the following. If $(t_n^*,v_n^*)_{n\geq 0}$ is a positive orbit of $P_f$ satisfying 
\begin{equation}\label{condug}
f(t_n^*)=f(t_0^*)\quad \mbox{for every $n\geq 0$}
\end{equation}
then $f[t^*_{n+1},t^*_{n}]=0$ for every $n\geq 0$ and $(t_n^*,v_n^*)_{n\geq 0}$ becomes a positive orbit for the generalized standard map
\begin{equation}\label{standard}
  GS:
  \left\{
\begin{array}{ll}
t_1=t_{0}+\frac{2}{g}v_{0}
\\
v_1=v_{0}+2\dot{f}(t_1)
\end{array}
\right.
\end{equation}
Conversely, if $(t_n^*,v_n^*)_{n\geq 0}$ is a positive orbit of $GS$ with $v_n>\bar{v}$ for every $n$ and satisfying condition (\ref{condug}) then it is also a positive orbit for $P_f$. The idea of Pustyl'nikov was to construct a positive orbit $(t_n^*,v_n^*)_{n\geq 0}$ of $GS$ such that for every $n$, $t_{n+1}^*-t_n^*\in\mathbb{N}\setminus\{0\}$ and
$v_{n+1}^*-v_n^*=g$. In this way, he got an unbounded orbit if $\max \dot{f}\geq g/2$ as stated in Theorem \ref{pusteo}. With the same idea the result can be improved a little. Here we report a complete proof.
\begin{prop}\label{pustrev}
	Suppose that
	\begin{equation}\label{conpus}
	\max \dot{f}\geq g/4.
	\end{equation}
	Then there exist $t_0^*$ and a positive integer $m$ such that the bouncing motion with initial condition $(t^*_0,v_0^*=mg/2)$ satisfies for every $n\geq 0$
	\begin{equation}\label{pustorbit}
	t_{n+1}^*-t_n^*=n+m, \qquad v_{n+1}^*-v_n^*=g/2.
	\end{equation}
\end{prop}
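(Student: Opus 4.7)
The plan is to follow the strategy already sketched in the excerpt: construct an orbit of the auxiliary map $GS$ in which all points lie over a single fiber of $\mathbb{T}$, so that condition (\ref{condug}) is automatic and the $GS$ orbit is simultaneously an orbit of $P_f$.

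First I would fix the candidate orbit by ansatz. Setting $v_n^{*}=g(n+m)/2$ immediately enforces $v_{n+1}^{*}-v_n^{*}=g/2$, and the first line of (\ref{standard}) then demands
\[
t_{n+1}^{*}-t_n^{*}=\tfrac{2}{g}v_n^{*}=n+m\in\mathbb{N}\setminus\{0\}.
\]
So both requested equalities in (\ref{pustorbit}) are built into the ansatz; what remains is to check that this sequence is actually an orbit of $GS$, and then that it lifts to an orbit of $P_f$.

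Since every $t_{n+1}^{*}-t_n^{*}$ is an integer, the iterates $t_n^{*}$ all lie in a single fiber modulo $1$. By $1$-periodicity, this has two clean consequences. On the one hand, $f(t_n^{*})=f(t_0^{*})$ for all $n$, which is precisely (\ref{condug}); hence $f[t_{n+1}^{*},t_n^{*}]=0$ and the $GS$ orbit coincides with a $P_f$ orbit. On the other hand, the $v$-equation of $GS$, namely $v_{n+1}^{*}-v_n^{*}=2\dot f(t_{n+1}^{*})$, together with $\dot f(t_{n+1}^{*})=\dot f(t_0^{*})$ (again by periodicity), collapses to the single scalar equation $\dot f(t_0^{*})=g/4$.

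Thus the construction reduces to solving this one equation. Since $f$ is $1$-periodic and smooth, $\int_{0}^{1}\dot f\,dt=0$, so $\min\dot f\le 0$. The hypothesis $\max\dot f\ge g/4$ and the intermediate value theorem then yield a $t_0^{*}$ with $\dot f(t_0^{*})=g/4$. Finally, I would choose the positive integer $m$ large enough that $v_0^{*}=mg/2>\bar v$; since $v_n^{*}$ is strictly increasing, the whole orbit remains in the domain $\mathbb{T}\times(\bar v,+\infty)$ where $P_f$ is a well-defined embedding. The only mildly delicate point is this last bookkeeping step, making sure that the $GS$ orbit one constructs really is a legitimate positive orbit of $P_f$; but once (\ref{condug}) is in place and $m$ is taken large, nothing more is needed.
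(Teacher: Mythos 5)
Your proof is correct and takes essentially the same approach as the paper: locate $t_0^*$ with $\dot f(t_0^*)=g/4$, set $v_0^*=mg/2$ with $m$ large, and observe that the resulting $GS$ orbit has integer time differences so that \eqref{condug} holds and it coincides with a $P_f$ orbit. The only cosmetic difference is that the paper phrases the verification as an induction on $n$ while you propose the full orbit by ansatz and check consistency directly, and you usefully make explicit the intermediate value theorem step (via $\int_0^1\dot f\,dt=0$, hence $\min\dot f\le 0$) that the paper leaves tacit when asserting the existence of $t_0^*$.
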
 
\begin{proof}
By hypothesis, there exists a point $t_0^*$ such that  $\dot{f}(t_0^*)= g/4$. Choose $v_0^*=mg/2$ with an integer $m$ such that $v_0>\bar{v}$. For $n\geq 0$, we consider $(t_n^*,v_n^*)=GS^n(t_0^*,v_0^*)$ and we prove by induction that it satisfies \eqref{pustorbit}, so that \eqref{condug} is satisfied and $(t_n^*,v_n^*)$ is also an orbit of $P$ with the desired properties. The case $n=0$ is obvious. Moreover, using the inductive hypothesis
\[
t_{n+2}^*-t_{n+1}^*=\frac{2}{g}v^*_{n+1}=\frac{2}{g}v^*_n+1=\frac{2}{g}(v^*_0+n\frac{g}{2})+1=(n+1)+m
\]          	
and
\[
 v_{n+2}^*-v_{n+1}^*=2\dot{f}(t^*_{n+2})=2\dot{f}(t^*_{n+1}+(n+1)+m)=2\dot{f}(t^*_{0})=g/2.
\]
\end{proof}

\begin{remark}\label{pustrem}
	Pustyl'nikov orbit is such that the velocity increases at every iterate of the map and the bouncing time are equal modulo $1$. We note that if $\max \dot{f}<g/4$ then this is not possible. Actually, suppose that $v_{n+1}^*-v_n^*>0$ and $t_{n+1}^*-t_n^*=\sigma_n\in\mathbb{N}$ for every $n\geq 0$. Then $f[t^*_{n+1},t^*_{n}]=0$ and $t_{n+1}^*-t_n^*=\frac{2}{g}v^*_n=\sigma_n$ with $\sigma_{n+1}>\sigma_n$. This implies that $v^*_n=\frac{g}{2}\sigma_n$ and
	\[
	\frac{g}{2}(\sigma_{n+1}-\sigma_{n})=v_{n+1}^*-v_n^*=2\dot{f}(t_{n+1}^*)<\frac{g}{2}
	\]
	that is a contradiction.      
\end{remark}

 In view of the previous Remark, we search for conditions to construct unbounded motions such that the velocity increases every $N$ iterates. This is achieved in \cite{maro4} with the following result.   

\begin{prop}[\cite{maro4}]\label{stand2}
	Consider a function $f\in C^1(\mathbb{R}/\mathbb{Z})$ and a sequence $(t_n^*)_{n\in\mathbb{N}}$. Suppose that there exist three positive integers $N, W, V$ such that
	
	\begin{enumerate}
		\item $t_N^*-t_0^*=W$,
		\item $\frac{4}{g}\dot{f}(t_0^*)+(t_N^*-t_{N-1}^*)-(t_1^*-t_0^*)=V$,
		\item $f(t^*_0)=f(t^*_1)=\dots =f(t^*_{N-1})$,
		\item $\dot{f}(t_k^*)=\frac{g}{4}(t_{k+1}^*-2t_k^*+t_{k-1}^*)$ for $1\leq k\leq N-1$.
	\end{enumerate}

	Then if we define $v^*_{n+1}=v^*_{n}+2\dot{f}(t^*_{n+1})$ and $v^*_0=\frac{g(t^*_1-t^*_0)}{2}$ we have that there exists an orbit $(\bar{t}_n^*,\bar{v}_n^*)_{n\in\mathbb{N}}$ of $P_f$ such that $(\bar{t}_n^*,\bar{v}_n^*)=(t_n^*,v_n^*)$ for $0\leq n\leq N$ and  
	$$
	\bar{t}_{n+N}^*=\bar{t}_n^*+\sigma_n,\quad \sigma_n\in\mathbb{N}
	$$
	$$
	\bar{v}^*_{n+N}=\bar{v}_n^*+\frac{g}{2}V.
	$$
	Moreover, there exists $T>0$ such that if $t_1^*-t_0^*>T$ then $v^*_n>\bar{v}$ for every $n\geq 0$. 
\end{prop}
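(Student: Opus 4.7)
The strategy is to first use hypotheses~(3) and~(4) to check that the finite sequence $(t_n^*,v_n^*)_{n=0}^N$ in the statement is a genuine $N$-step orbit of $P_f$, through the intermediary of the generalized standard map $GS$ from~\eqref{standard}, and then to extend it forward in time by exploiting that the lift of $P_f$ commutes with integer translations of the $t$-coordinate. The update $v_{n+1}^* = v_n^* + 2\dot{f}(t_{n+1}^*)$ is by definition; the companion identity $t_{k+1}^* = t_k^* + (2/g)v_k^*$ is the definition of $v_0^*$ for $k=0$ and follows by an easy induction from condition~(4) for $1\le k\le N-1$. Hence $(t_n^*,v_n^*)_{n=0}^N$ is an orbit of $GS$. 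As recalled just before the proposition, it is also an orbit of $P_f$ provided $f[t_{n+1}^*,t_n^*]=0$ at every transition: for $0\le n\le N-2$ this is condition~(3), while for $n=N-1$ condition~(1) and the $1$-periodicity of $f$ give $f(t_N^*)=f(t_0^*)=f(t_{N-1}^*)$. Computing the one-block shifts then yields $t_N^*-t_0^*=W$ by~(1), and $v_N^*-v_0^*=(g/2)V$ after telescoping~(4), using periodicity to turn $2\dot{f}(t_N^*)$ into $2\dot{f}(t_0^*)$, and invoking~(2).

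To extend to an infinite orbit I will prove by induction on $q\ge 0$ that
\[
P_f^{qN+r}(t_0^*,v_0^*) = \bigl(t_r^* + \gamma_{q,r},\; v_r^* + q(g/2)V\bigr),\qquad 0\le r\le N,
\]
for positive integers $\gamma_{q,r}$. Given the formula at level $q$, I rerun the first stage of the argument starting from the point $(t_0^*, v_0^* + q(g/2)V)$: periodicity of $f$ gives $f(t_k^*+kqV)=f(t_k^*)$, so every finite difference still vanishes by~(3) and~(1), and $r$ iterations produce $(t_r^*+rqV,\,v_r^*+q(g/2)V)$. Commuting $P_f$ with the integer $t$-translation $T_{\gamma_{q,0}}$ then closes the induction and forces the explicit bookkeeping $\gamma_{q,r}=qW+\binom{q}{2}NV+rqV\in\mathbb{N}$. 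Writing $n=qN+r$, one immediately reads off $\bar{t}_{n+N}^*-\bar{t}_n^*=W+nV\in\mathbb{N}$ and $\bar{v}_{n+N}^*-\bar{v}_n^*=(g/2)V$, as required. For the last assertion, $\bar{v}_n^*\ge v_r^*\ge v_0^*-2N\max|\dot{f}|=g(t_1^*-t_0^*)/2-2N\max|\dot{f}|$, so any $T>(2/g)(\bar{v}+2N\max|\dot{f}|)$ works.

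The most delicate step will be the inductive extension: I need to verify that even after many applications of $P_f$ the cumulative integer shifts accumulated in the $t$-coordinate continue to reduce each finite difference $f[\bar{t}_{n+1}^*,\bar{t}_n^*]$ to zero. This succeeds precisely because conditions~(3) and~(1) jointly make the orbit $N$-periodic modulo~$1$, and because the quadratic bookkeeping term $\binom{q}{2}NV$ remains an integer for every $q$ (using that $q(q-1)$ is always even). Once this structural observation is in place, the remaining algebraic identities are routine.
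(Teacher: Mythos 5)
The paper does not give its own proof of this proposition—it is imported verbatim from \cite{maro4}—so there is nothing to compare line by line. That said, your reconstruction is correct and follows the route the surrounding text of the paper clearly intends. You correctly verify that conditions (3) and (4), together with the definition of $v_0^*$, force the finite sequence $(t_n^*,v_n^*)_{n=0}^N$ to be a $GS$-orbit satisfying $v_k^*=\frac{g}{2}(t_{k+1}^*-t_k^*)$; you correctly reduce the $P_f$-orbit property to the vanishing of the divided differences, handling the boundary case $n=N-1$ via condition (1) and periodicity; and your telescoping of (4) together with (2) gives $v_N^*-v_0^*=\frac{g}{2}V$ exactly. The inductive extension is the point where one could slip, and you handle it properly: the key observations are that the lift of $P_f$ commutes with integer $t$-translations, that $f(t_k^*+\text{integer})=f(t_k^*)$ keeps all finite differences zero, and that the accumulated shift $\gamma_{q,r}=qW+\binom{q}{2}NV+rqV$ is always an integer (because $q(q-1)$ is even), yielding $\bar{t}_{n+N}^*-\bar{t}_n^*=W+nV\in\mathbb{N}$. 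The final bound $T>\frac{2}{g}\bigl(\bar{v}+2N\max|\dot{f}|\bigr)$ is also sound, using $\bar{v}_n^*\ge v_0^*-2N\max|\dot f|$ and $V>0$. The argument is complete; if anything, it would read more cleanly if the velocity lower bound were stated up front, since the well-posedness of each application of $P_f$ (the implicit equation for $t_{n+1}$ having a unique admissible solution) depends on staying above $\bar v$ throughout.
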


\begin{remark}
For $N=1$ all the conditions are satisfied if
\begin{equation*}
\frac{4}{g}\dot{f}(t_0^*)\in\mathbb{N}\setminus\{0\}.
\end{equation*}
In this case we can select $t_1^*$ such that $t_1^*-t_0^*$ is a sufficiently large integer. Hence we get the result in Proposition \ref{pustrev}. 
\end{remark}
Next we show that Proposition \ref{stand2} can be used to find a real analytic function $p$ that does not satisfy Pustyl'nikov condition \eqref{conpus} and admits unbounded orbits. More precisely:

\begin{prop}\label{oneparfam}
	There exist a real open interval $I$ and a one parameter family $p_s(t), s\in I$ of $1$-periodic trigonometric polynomials of degree $2$ such that
	\[
	\max \dot{p}_s <\frac{g}{4} \quad \forall s\in I
	\]
	and the corresponding maps $P_{p_s}$ admit an unbounded motion as in Proposition \eqref{stand2} independent on $s$.
\end{prop}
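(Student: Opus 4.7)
The plan is to apply Proposition~\ref{stand2} with $N = 2$ and to construct the family $p_s$ by solving the resulting boundary conditions inside the four-dimensional space of real trigonometric polynomials of degree $2$.

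First, I would specialize Proposition~\ref{stand2} to $N = 2$. Writing $\alpha = t_1^* - t_0^*$, $\beta = t_2^* - t_1^*$ and setting $s = (\beta - \alpha)/2$, condition~(1) becomes $\alpha + \beta = W \in \mathbb{N}$, while conditions~(2) and~(4) read
\[
\dot{p}_s(t_0^*) = \frac{g(V-2s)}{4}, \qquad \dot{p}_s(t_1^*) = \frac{gs}{2}.
\]
Demanding that both values be strictly below $g/4$ and that $V$ be a positive integer forces $V = 1$ and $s \in (0,1/2)$, so $s$ becomes the natural family parameter. I would fix $W$ to be a sufficiently large even integer (so that $\alpha = W/2 - s > T$ in the final statement of Proposition~\ref{stand2}) and, by translation, take $t_0^* \equiv s/2$ and $t_1^* \equiv -s/2 \pmod 1$.

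Next, parametrize a general real trigonometric polynomial of degree $2$ as
\[
p_s(t) = a_1 \cos(2\pi t) + a_2 \sin(2\pi t) + b_1 \cos(4\pi t) + b_2 \sin(4\pi t),
\]
and impose the three remaining scalar conditions from Proposition~\ref{stand2}: the equality $p_s(s/2) = p_s(-s/2)$ together with the two prescribed values of $\dot{p}_s$ at $\pm s/2$. Taking sums and differences of the derivative equations and using the even/odd decomposition of $p_s$ around $t = 0$ reduces the system to three linear equations in $(a_1,a_2,b_1,b_2)$, leaving exactly one free coefficient. For a definite choice I would set $b_1 = 0$, which gives explicit closed-form expressions for $a_1, a_2, b_2$ as rational functions of $\sin(\pi s)$, $\cos(\pi s)$ and $1 - 4s$.

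The delicate step is the strict inequality $\max \dot{p}_s < g/4$ on an open set of $s$-values. By construction $\dot{p}_s(\pm s/2)$ is strictly below $g/4$; the real task is to exclude overshoots elsewhere. A short computation shows
\[
\dot{p}_s(1/4) + \dot{p}_s(-1/4) = \frac{g}{4\sin^2(\pi s)},
\]
so the bound at these two test points already forces $s > 1/4$; combined with $s < 1/2$, this isolates the candidate range $(1/4,1/2)$. Restricting to that range and fixing one explicit value $s_* \in (1/4,1/2)$ (for instance $s_* = 3/8$), I would verify $\max \dot{p}_{s_*} < g/4$ by a direct critical-point analysis of the explicit degree-$2$ trigonometric polynomial. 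Continuity of $s \mapsto \max \dot{p}_s$ then propagates the strict bound to a full open neighbourhood $I \ni s_*$, which is the interval required by the proposition. This last explicit verification is the main obstacle; everything else reduces to solving a small linear system and choosing $W$ large.
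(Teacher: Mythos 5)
Your high-level strategy matches the paper's: apply Proposition~\ref{stand2} with $N=2$, observe that the resulting constraints are a linear system in the four coefficients of a degree-$2$ trigonometric polynomial with one degree of freedom left over, and then locate a parameter value where $\max\dot p < g/4$ and propagate by continuity. The concrete parametrization, however, is genuinely different. The paper freezes the geometry of the bounce times (it picks $t_0^*=0$, $t_1^*\equiv 5/12\pmod 1$, which in your normalization corresponds to the fixed value $s=1/12$) and lets the leftover coefficient $a_1=gs$ of the under-determined linear system run as the family parameter; you instead freeze the leftover coefficient by decreeing $b_1=0$ and let the time asymmetry $s=(\beta-\alpha)/2$ run. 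Both routes are legitimate, and your computation of the reduced conditions
\[
\dot p_s(t_0^*)=\tfrac{g(V-2s)}{4},\qquad \dot p_s(t_1^*)=\tfrac{gs}{2}
\]
forcing $V=1$, $s\in(0,1/2)$ is correct. The test-point identity
\[
\dot p_s(1/4)+\dot p_s(-1/4)=\frac{g}{4\sin^2(\pi s)}
\]
checks out under $b_1=0$ (one finds $b_2=-g/(32\pi\sin^2\pi s)$, so the sum equals $-8\pi b_2$) and is a nice necessary condition; be aware, though, that it is a feature of the $b_1=0$ slice and not an intrinsic constraint -- the paper's working example corresponds to $s=1/12$, which is well below your threshold $1/4$, precisely because it takes $b_1\neq 0$.

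The only genuine gap is the final step: you say you \emph{would} verify $\max\dot p_{s_*}<g/4$ for some $s_*\in(1/4,1/2)$ by a critical-point analysis, but the proposal does not carry this out, and the whole construction stands or falls on it. In practice a critical-point analysis of a degree-$2$ trigonometric polynomial leads to a quartic and is unpleasant; the paper sidesteps this with the coarse but sufficient bound $\max\dot p \leq 2\pi\sqrt{a_1^2+b_1^2}+4\pi\sqrt{a_2^2+b_2^2}$ followed by a numerical evaluation. For your slice the corresponding bound at $s_*=3/8$ gives roughly $2\pi\sqrt{a_1^2+a_2^2}+4\pi|b_2|\approx 0.234\,g<g/4$, so your candidate should indeed work -- but you need to actually perform this estimate (or some equivalent) rather than leave it as a plan, and you should also note that $a_1\neq 0$ on $(1/4,1/2)$ so that $p_s$ has minimal period $1$.
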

\begin{proof}
	Consider a generic trigonometric polynomial of degree $2$
	\begin{equation}\label{generaltp}
	p(t) = a_1 \sin(2\pi t)+b_1 \cos(2\pi t) + a_2 \sin(4\pi t) + b_2 \cos(4\pi t). 
	\end{equation}
	We search for conditions on the real parameters $a_1,b_1,a_2,b_2$ such that, properties 1. 2. 3. 4. in Proposition \ref{stand2} are satisfied for $N=2$ and some $t_0^*<t_1^*$. These can be rewritten as 

	\begin{enumerate}
		\item[1.] $t_2^*-t_0^*=W$
		\item[2'.] $2(t_1^*-t_0^*)+\frac{4}{g} \dot{p}(t_1^*)=W$,
		\item[3.] $p(t^*_0)=p(t^*_1)$,
		\item[4'.] $\frac{4}{g}(\dot{p}(t_0^*)+\dot{p}(t_1^*))=V$.
	\end{enumerate}
   	   	Actually, conditions 1. and 2'. imply condition 4., and using condition 4'. we get also condition 2. Hence, let us search for conditions on $a_1,b_1,a_2,b_2$ in order to satisfy properties 1. 2'. 3. 4'. Let us choose $t_0^*=0$, $t_1^*=5/12+k$ where $k>T$ is a positive integer and $V=W-2k=1$. With these values fix $t_2^*$ to comply with condition 1. Hence conditions 2'.,3.,4'. represent a linear system of 3 equations in the unknowns $a_1,b_1,a_2,b_2$ with parametric solution
	\begin{equation}\label{coeff}
		\begin{split}
			&a_1(s)=gs, \quad b_1(s)=g\left((2-\sqrt{3})s+\frac{4\sqrt{3}-7}{4\pi}\right) \\
			&a_2(s)=g\left(\frac{5}{96\pi}-\frac{s}{2}\right), \quad b_2(s)=g\left(\frac{\sqrt{3}}{2}s+\frac{48-29\sqrt{3}}{96\pi} \right).
		\end{split}
	\end{equation}

Substituting these values in \eqref{generaltp} we get a one parameter family  $p_s(t)$ with $s\in \mathbb{R}$
satisfying 1., 2'., 3., 4' for every $s$ with the same values of $t_0^*,t_1^*$. 
Hence, for every $s\neq 0$ we can apply Proposition \ref{stand2} to $P_{p_s}$ and find the same unbounded motion. \\ 
Finally, to estimate the maximum of $\dot{p}_s$, using 
$a\sin x + b\cos x\leq \sqrt{a^2+b^2}$, we have that
for every $s$, 
\begin{equation}\label{maxps}
\begin{split}
	\max \dot{p}_s &\leq 2\pi\sqrt{a_1(s)^2+b_1(s)^2}+4\pi\sqrt{a_2(s)^2+b_2(s)^2}=:g\bar{p}(s). 
\end{split}
\end{equation}
Now a direct computation shows that $\bar{p}(0.006)<1/4$ so that by continuity there exists a real open interval $I\ni 0.006$ such that for every $s\in I$, $\max \dot{p}_s<g/4$. Finally, we can also restrict $I$ in such a way that $0\notin I$. In this way $a_1\neq 0$ and $p_s(t)$ has minimal period $1$ for every $s\in I$.   

\end{proof}

\begin{remark}\label{numeric}
	It is possible to have a numerical approximation of the optimal value for $\max\dot{p}_s$, coming from formula \eqref{maxps}. Actually the minimum of $\bar{p}_s$ can be approximated by
	\[
	\bar{s}=0.009569094523943, \qquad \bar{p}_s(\bar{s})=0.211931840664873.
	\]  
	 
\end{remark}

Note that Proposition \ref{oneparfam} gives a proof of Theorem \ref{main} and Corollary \ref{main2}, items \emph{(i),(ii)}, concerning the existence of just one unbounded motion. The proof of the existence of a continuum of unbounded motions is the purpose of the next section.

\section{Existence of a continuum of unbounded motions}\label{sec:contunb}

By now we have an unbounded motion. In this Section we prove that it can be extended to a continuum of unbounded motions. A key role is played by a theorem on non autonomous perturbations of hyperbolic maps.
  
\subsection{Non autonomous perturbations of hyperbolic maps}
 In this subsection we are going to discuss and prove the following result, a discrete version of a classical theorem of differential equations (see \cite[Theorem 4.1 pp 330]{coddingtonlevinson}).
 
\begin{teo}\label{cl}
Consider the difference equation
$$
x_{n+1}=Ax_n+R_n(x_n)
$$
where $A$ is a $m\times m$ matrix such that $k$ eigenvalues have modulus less than $1$ and the remaining $m-k$ have modulus greater than $1$.
Let $\mathcal{U}$ be an open neighbourhood of the origin and suppose that $R_n:\mathcal{U}\subset\mathbb{R}^m\rightarrow\mathbb{R}^m$ is continuous for every $n$ and $R_n(0)=0$. Moreover suppose that for every $\epsilon>0$ there exist $\delta>0$ and $M>0$ such that for every $n>M$ and $u,v\in \mathcal{B}_\delta$
$$
|R_n(u)-R_n(v)|\leq\epsilon |u-v|
$$
with $\mathcal{B}_\delta$ representing the ball centered in $0$ with radius $\delta$.  
Then there exists $n_0$ such that for every $n\geq n_0$ there exists a $k$-dimensional topological manifold $S=S_{n}$ passing through zero such that if $x_{n_0}\in S_{n_0}$ then $x_{n}\in S_{n}$ for every $n\geq n_0$ and $x_n\to 0$ as $n>n_0$ tends to $+\infty$. 
\end{teo}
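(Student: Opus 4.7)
The plan is to adapt the classical Lyapunov--Perron construction to this discrete non-autonomous setting. After a linear change of coordinates one may assume
$$
A=\begin{pmatrix} A_s & 0\\ 0 & A_u\end{pmatrix},
$$
with $A_s$ of size $k\times k$, $A_u$ of size $(m-k)\times(m-k)$, and $\|A_s^n\|\leq C\lambda^n$, $\|A_u^{-n}\|\leq C\lambda^n$ for some $C\geq 1$, $\lambda\in(0,1)$ and all $n\geq 0$. Let $E_s,E_u$ be the corresponding stable and unstable subspaces and $\pi_s,\pi_u$ the projections. Fix $\mu\in(\lambda,1)$ and a small $\epsilon>0$ (to be tuned later); by hypothesis there are $\delta>0$ and $M>0$ such that every $R_n$ with $n>M$ is $\epsilon$-Lipschitz on $\mathcal{B}_\delta$, and in particular $|R_n(u)|\leq\epsilon|u|$ there since $R_n(0)=0$. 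Set $n_0=M+1$.

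The key object is the Lyapunov--Perron operator. On the Banach space
$$
X=\Big\{x=(x_n)_{n\geq n_0}\subset\mathbb{R}^m \,:\, \|x\|:=\sup_{n\geq n_0}\mu^{-(n-n_0)}|x_n|<+\infty\Big\}
$$
define, for each $\xi\in E_s$,
$$
(T_\xi x)_n=A_s^{n-n_0}\xi+\sum_{j=n_0}^{n-1}A_s^{n-j-1}\pi_s R_j(x_j)-\sum_{j=n}^{+\infty}A_u^{n-j-1}\pi_u R_j(x_j),
$$
with the first sum empty for $n=n_0$. A direct computation (variation of constants plus summing the unstable equation backwards from $+\infty$) shows that an $x\in X$ with $x_n\in\mathcal{B}_\delta$ solves $x_{n+1}=Ax_n+R_n(x_n)$ on $[n_0,+\infty)$ with $\pi_s x_{n_0}=\xi$ and $x_n\to 0$ if and only if it is a fixed point of $T_\xi$. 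Using the $\epsilon$-Lipschitz bound and summing the two geometric series one obtains
$$
\|T_\xi x-T_\xi y\|\leq \epsilon C\Big(\tfrac{1}{\mu-\lambda}+\tfrac{\lambda}{1-\lambda\mu}\Big)\|x-y\|,
$$
together with a self-mapping estimate for the closed ball of radius $r$ in $X$, provided $r<\delta$ and $|\xi|$ is at most a fixed fraction of $r$. Choosing $\epsilon$ small enough ensures that $T_\xi$ is a contraction, producing a unique fixed point $x(\xi)$ depending Lipschitz-continuously on $\xi$.

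From this fixed point one defines the stable manifold at time $n_0$ as $S_{n_0}=\{x_{n_0}(\xi)\,:\,\xi\in E_s,\ |\xi|<\rho\}$ for a sufficiently small $\rho$. Since $\pi_s x_{n_0}(\xi)=\xi$, while
$$
\pi_u x_{n_0}(\xi)=-\sum_{j=n_0}^{+\infty}A_u^{n_0-j-1}\pi_u R_j(x_j(\xi))
$$
is continuous in $\xi$, $S_{n_0}$ is the graph over a neighbourhood of the origin in $E_s$ of a continuous $E_u$-valued map, hence a $k$-dimensional topological manifold through the origin. For $n>n_0$ one sets $S_n=\{x_n(\xi)\,:\,|\xi|<\rho\}$; this set is invariant by construction and consists of points whose forward orbits decay like $\mu^{\,n-n_0}$ to $0$.

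The main obstacle is the simultaneous calibration of the small parameters $\epsilon,r,\rho$: the self-map and the contraction conditions on $T_\xi$ are coupled through the constant in the contraction estimate, and this is what forces one to start from $n_0=M+1$ rather than from time zero. The only other non-routine point is that the Lipschitz hypothesis on $R_n$ is only asymptotic in $n$, so no global smallness is available and the whole argument must be run on the tail $[n_0,+\infty)$; this is precisely the reason the theorem delivers the manifolds $S_n$ only for $n\geq n_0$.
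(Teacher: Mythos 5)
Your proof is correct and follows essentially the same Lyapunov--Perron strategy as the paper: your fixed-point equation $T_\xi x = x$ is exactly the integral form \eqref{lime}, and the weighted Banach-space contraction argument is the standard repackaging of the paper's explicit successive approximations $\theta_n^{(l)}$ together with the geometric estimates \eqref{lips}. The only cosmetic differences are that you invoke the contraction mapping theorem directly (incidentally yielding Lipschitz rather than merely continuous dependence on $\xi$) and carry the decay rate in the weight $\mu^{-(n-n_0)}$ of the norm rather than in the pointwise bound $\delta_\epsilon\alpha^{n-n_0}$ of \eqref{tendo}.
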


The rest of the subsection is dedicated to the proof of this result. 
Let us start with some preliminaries. There exists a real nonsingular matrix $P$ such that
\[
PAP^{-1}=B=\left(
\begin{array}{cc}
	B_1 &  0 \\ 
	0 & B_2
\end{array}
\right)
\]
and $B_1$ is a $k\times k$ matrix with all eigenvalues with modulus less than one and $B_2$ is a $(m-k)\times (m-k)$ matrix with all eigenvalues with modulus greater than one.
By the change of variables $y=Px$ our system becomes equivalent to 
\begin{equation}\label{eqy}
	y_{n+1}=By_n+g_n(y_n)
\end{equation}
with $g_n(y_n)=PR_n(P^{-1}y_n)$. Moreover, $g(0)=0$ and given any $\epsilon>0$, there exist $M_\epsilon$ and $\delta_\epsilon$ such that
\begin{equation}\label{lipg}
	|g_n(u)-g_n(v)|\leq\epsilon |u-v|
\end{equation}
for $n>M_\epsilon$ and $u,v\in \mathcal{B}_{\delta_\epsilon}$.  
Now define 
\begin{equation*}
	U_1=
	\left(
	\begin{array}{cc}
		B_1 &  0 \\ 
		0 & 0
	\end{array}
	\right) 
\end{equation*}
and
\begin{equation*}
	U_2=
	\left(
	\begin{array}{cc}
		0 &  0 \\ 
		0 & B_2
	\end{array}
	\right). 
\end{equation*}
Note that $U^n=(U_1+U_2)^n$ is the matrix solution of the linear system $y_{n+1}=By_n$. Moreover, there exist $K>0$, $\alpha_1<1$ and $\alpha_2<1$ such that 
\begin{equation}\label{sti1}
	|U_1^n|\leq K\alpha_1^n,
\end{equation}
\begin{equation}\label{sti2}
	|U_2^{-n}|\leq K\alpha_2^n
\end{equation}
where $| \cdot |$ is the associated matrix norm. With this notation, equation \eqref{eqy} can be rewritten in an integral form
\begin{lemma}\label{lemequiv}
	Suppose that for $a\in\mathbb{R}^m$ and $n_0>0$ there exists a sequence $\{\theta_n(a)\}_{n\geq n_0}$ such that
	\begin{equation}\label{lime}
		\theta_n(a)=U_1^{n-n_0}a  +\sum_{s=n_0}^{n-1} U_1^{n-s-1}g_s(\theta_s(a))
		-\sum_{s=n}^\infty U_2^{n-s-1}g_s(\theta_s(a))
	\end{equation}
for every $n>n_0$ where the last sum is convergent. Then this sequence satisfies the difference equation \eqref{eqy}.
\end{lemma}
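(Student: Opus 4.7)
The plan is a direct substitution: write down $\theta_{n+1}(a)$ using \eqref{lime} with $n$ replaced by $n+1$, compute $B\theta_n(a)$ from the formula for $\theta_n(a)$, and verify that the difference is $g_n(\theta_n(a))$.

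The key algebraic observation is that $U_1$ and $U_2$ are the components of $B$ along complementary invariant subspaces: they commute and satisfy $U_1U_2=U_2U_1=0$. Consequently, applying $B=U_1+U_2$ to any power $U_1^k$ yields $U_1^{k+1}$ (the $U_2$ contribution vanishes), and similarly $BU_2^k=U_2^{k+1}$ for every exponent $k$ appearing in the two sums (negative powers are to be interpreted on the $B_2$-invariant subspace, where $B_2$ is invertible). Applying $B$ term by term to \eqref{lime} therefore just shifts every exponent up by one:
\begin{equation*}
B\theta_n(a) = U_1^{n+1-n_0}a + \sum_{s=n_0}^{n-1}U_1^{n-s}g_s(\theta_s(a)) - \sum_{s=n}^{\infty}U_2^{n-s}g_s(\theta_s(a)).
\end{equation*}

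Now I would compare this with $\theta_{n+1}(a)$, obtained from \eqref{lime} by replacing $n$ with $n+1$. The first, finite sum acquires one extra term $U_1^{0}g_n(\theta_n(a))$ because its upper index moves from $n-1$ to $n$, while the tail sum loses the term $-U_2^{0}g_n(\theta_n(a))$ because its lower index moves from $n$ to $n+1$, which amounts to adding $U_2^{0}g_n(\theta_n(a))$. In the convention implicit in \eqref{lime}, $U_1^{0}$ and $U_2^{0}$ are the spectral projections onto the two invariant subspaces of $B$ and therefore sum to the identity. The boundary contribution is thus exactly $g_n(\theta_n(a))$, and one reads off $\theta_{n+1}(a)=B\theta_n(a)+g_n(\theta_n(a))$, which is \eqref{eqy}.

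The main obstacle is purely bookkeeping: tracking the index shifts in both sums and interpreting $U_1^{0}$ and $U_2^{0}$ consistently as complementary projections rather than as the identity matrix. No analytic subtlety arises, since the shifted tail $\sum_{s=n}^{\infty}U_2^{n-s}g_s(\theta_s(a))$ differs from the tail assumed convergent in the hypothesis only by a single application of the bounded operator $B_2$ on its invariant subspace, so term-by-term application of $B$ inside the infinite sum is legitimate.
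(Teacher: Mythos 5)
Your proof is correct and follows essentially the same route as the paper: both verify the identity by isolating the boundary term $g_n(\theta_n(a))$ coming from the index shift, using the block structure $B=U_1+U_2$, $U_1U_2=U_2U_1=0$. The paper's version writes the extra term as $U_1U_1^{-1}g_n(\theta_n(a))+U_2U_2^{-1}g_n(\theta_n(a))$ and then invokes the block structure to read this sum as $g_n(\theta_n(a))$; your observation that $U_1^{0}$ and $U_2^{0}$ must be interpreted as complementary spectral projections (not as the identity on all of $\mathbb{R}^m$) is exactly the point that makes this step, and the identity $BU_1^k=U_1^{k+1}$, legitimate, so you have made explicit a convention the paper leaves implicit. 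Your remark on why applying $B$ inside the infinite tail is allowed (it amounts to one application of the bounded operator $B_2$ on its invariant subspace) is also a welcome precision, though it does not change the substance of the argument.
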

\begin{proof}
	Equation \eqref{eqy} can be written as $y_{n+1}=(U_1+U_2)y_n+g_n(y_n)$. Then,
	\begin{align*}
	\theta_{n+1}(a)&=U_1^{n-n_0+1}a  +\sum_{s=n_0}^{n} U_1^{n-s}g_s(\theta_s(a))
	-\sum_{s=n+1}^\infty U_2^{n-s}g_s(\theta_s(a)) \\
	&=U_1\left(U_1^{n-n_0}a  +\sum_{s=n_0}^{n} U_1^{n-s-1}g_s(\theta_s(a))\right)
	-U_2\sum_{s=n+1}^\infty U_2^{n-s-1}g_s(\theta_s(a)) \\
	&=U_1\left(U_1^{n-n_0}a  +\sum_{s=n_0}^{n-1} U_1^{n-s-1}g_s(\theta_s(a))\right)
	-U_2\sum_{s=n}^\infty U_2^{n-s-1}g_s(\theta_s(a)) \\ 
	&\phantom{=}+ U_1U_1^{-1}g_n(\theta_n(a))+U_2U_2^{-1}g_n(\theta_n(a)) \\
	&=(U_1+U_2)\theta_{n}(a)+g_n(\theta_n(a)) 	
			\end{align*}  
where in the last equality we have used the fact that $U_1$ only acts on the first $k$ rows and $U_2$ only on the last $m-k$ and their images are subspaces dependent only on the first $k$ and $m-k$ coordinates respectively.  

\end{proof}
The following lemma is crucial
\begin{lemma}\label{lemcruc}
	There exist $n_0>0, \delta>0$ and a sequence $\{\theta_n(a)\}_{n\geq n_0}$ satisfying the following properties
	\begin{enumerate}
		\item the function $a\mapsto\theta_n(a)$ is continuous for $n\geq n_0$ and $|a|<\delta$,
		\item $\theta_n(a)\to 0$ as $n\to\infty$ uniformly for $|a|<\delta$,
		\item $\{\theta_n(a)\}_{n\geq n_0}$ satisfies \eqref{lime}.  
	\end{enumerate}  
\end{lemma}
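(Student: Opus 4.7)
The plan is to obtain $\theta_n(a)$ as the fixed point of a contraction on a suitable Banach space of sequences, so that all three conclusions follow simultaneously from a single standard argument. The key observation is that the right-hand side of \eqref{lime} depends on the whole sequence $\{\theta_s(a)\}_{s\geq n_0}$, so I would look at the map
\[
(T\theta)_n := U_1^{n-n_0}a + \sum_{s=n_0}^{n-1} U_1^{n-s-1}g_s(\theta_s) - \sum_{s=n}^{\infty} U_2^{n-s-1}g_s(\theta_s)
\]
and show that, for $n_0$ large and $|a|$ small, $T$ is a contraction on a closed ball in an appropriate space. The natural choice of space is the weighted space
\[
X_\beta = \Bigl\{\theta=(\theta_n)_{n\geq n_0}\ :\ \|\theta\|_\beta := \sup_{n\geq n_0} \beta^{-(n-n_0)}|\theta_n|<\infty\Bigr\},
\]
with $\beta\in(\alpha_1,1)$ fixed and chosen so that $\alpha_2\beta<1$ as well; this weight is what will force $\theta_n\to 0$ uniformly in $a$.

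Next I would carry out the elementary estimates. Using \eqref{sti1}, \eqref{sti2}, $g_s(0)=0$, and the Lipschitz property \eqref{lipg} of $g_s$ for $s>M_\epsilon$ on $\mathcal{B}_{\delta_\epsilon}$, one obtains
\[
\bigl|U_1^{n-s-1}g_s(\theta_s)\bigr|\leq \epsilon K\alpha_1^{n-s-1}|\theta_s|,\qquad
\bigl|U_2^{n-s-1}g_s(\theta_s)\bigr|\leq \epsilon K\alpha_2^{s-n+1}|\theta_s|.
\]
Summing two geometric series and factoring out $\beta^{n-n_0}$ gives
\[
\|T\theta\|_\beta \leq K|a| + \epsilon K\, Q(\beta)\,\|\theta\|_\beta,\qquad
\|T\theta-T\tilde\theta\|_\beta \leq \epsilon K\, Q(\beta)\,\|\theta-\tilde\theta\|_\beta,
\]
where $Q(\beta)=\beta/(\beta-\alpha_1)+\alpha_2/(1-\alpha_2\beta)$ is a finite constant depending only on $\alpha_1,\alpha_2,\beta$. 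Choosing $\epsilon$ small enough that $q:=\epsilon K Q(\beta)<1$, then fixing $n_0\geq M_\epsilon$ accordingly, I get a contraction; choosing $\delta>0$ so small that the closed ball of radius $\delta_\epsilon$ in $X_\beta$ is invariant under $T$ whenever $|a|<\delta$ (which is a condition of the form $K\delta/(1-q)\leq \delta_\epsilon$) ensures the Lipschitz hypothesis on $g_s$ is applicable throughout the iteration.

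Finally I would read off the three items of the lemma from the fixed point $\theta(a)=T\theta(a)$. Item (3) holds by construction. Item (2) is immediate from the weighted bound $|\theta_n(a)|\leq \|\theta(a)\|_\beta\,\beta^{n-n_0}\leq K|a|\beta^{n-n_0}/(1-q)$, which tends to $0$ as $n\to\infty$ uniformly in $|a|<\delta$. Item (1) follows because $T$ depends continuously on the parameter $a$ in the operator norm and the contraction constant $q$ is uniform in $a$, so the parameter-dependent Banach fixed point theorem provides continuity of $a\mapsto\theta(a)$ in $X_\beta$, hence pointwise in each $n$. The main technical point, and the only place where some care is required, is to pick the chain of small parameters in the correct order: first $\beta$ to make $Q(\beta)$ finite, then $\epsilon$ small enough to make $q<1$, then $n_0$ large enough (via the Lipschitz hypothesis on $R_n$) so that $g_s$ is $\epsilon$-Lipschitz for all $s\geq n_0$, and finally $\delta$ small enough so that the invariant ball sits inside $\mathcal{B}_{\delta_\epsilon}$ where the Lipschitz estimate holds; once this ordering is respected, the rest is routine.
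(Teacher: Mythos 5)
Your proposal is correct, and it is essentially the same argument as the paper's: the paper carries out the Picard iteration $\theta^{(l+1)}=T\theta^{(l)}$ by hand, proving inductively the geometric decay $|\theta_n^{(l+1)}-\theta_n^{(l)}|\le K|a|\alpha^{n-n_0}/2^l$ and concluding uniform convergence and continuity via the Weierstrass test, whereas you package exactly those estimates into a contraction-mapping argument on the weighted space $X_\beta$ (the weight $\beta^{-(n-n_0)}$ playing the role of the paper's exponent $\alpha^{n-n_0}$, and the order of choices $\beta\to\epsilon\to n_0\to\delta$ matching the paper's $\alpha\to\epsilon\to n_0\to\delta_\epsilon/2K$). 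The Banach fixed point phrasing does make items (1) and (3) drop out automatically and avoids the dominated-convergence step the paper needs when passing to the limit in the tail sum, but the underlying estimates and the mechanism producing the stable manifold are the same.
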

We postpone the proof to the end of the subsection and we show how we can conclude. First note the from \eqref{lime} the last $m-k$ components of $a$ do not enter in the solution $\theta_n(a)$ so that we can set them at $0$. Consider now, for $|a|$ sufficiently small the continuous function 
\[
\theta_{n_0}(a)=
\left(
\begin{array}{cc}
	I_k &  0 \\ 
	0 &  0
\end{array}
\right)
a-\sum_{s=n_0}^\infty U_2^{n_0-s-1}g_s(\theta_s(a)),
\]
where $I_k$ represents the identity matrix of order $k$.
Notice that, by the definition of the matrix $U_2$, the first $k$ components of $\theta_{n_0}(a)$ are
\[
(\theta_{n_0}(a))_j = a_j, \qquad j=1,\dots,k
\] 
and the remaining $m-k$ are, 
\[
(\theta_{n_0}(a))_j = (\phi_{n_0}(a))_j, \qquad j=k+1,\dots,m.
\]
where $\phi_{n_0}(a)=-\sum_{s=n_0}^\infty U_2^{n_0-s-1}g_s(\theta_s(a))$ is continuous in $a$.
 Then we can see $\theta_{n_0}(a)$ in the form
$$
\theta_{n_0}(a)=(a_1,\dots,a_k,(\phi_{n_0}(a_1,\dots,a_k))_{k+1},\dots,(\phi_{n_0}(a_1,\dots,a_k))_{m})
$$ 
where $\phi_{n_0}(a_1,\dots,a_k)=-\sum_{s=n_0}^\infty U_2^{n_0-s-1}g_s(\theta_s(a_1,\dots,a_k,0,\dots,0))$ is continuous in $a_1,\dots,a_k$. Hence $S_{n_0}:=\graph\phi_{n_0}$ represents a $k$ dimensional manifold in $\mathbb{R}^m$. Analogously, for $n>n_0$ we consider for $|a|$ small the continuous function
\[
\theta_n(a)=U_1^{n-n_0}a  +\sum_{s=n_0}^{n-1} U_1^{n-s-1}g_s(\theta_s(a))
		-\sum_{s=n}^\infty U_2^{n-s-1}g_s(\theta_s(a))
\]
and note that the components of $\theta_{n}(a)$ are
\[
(\theta_{n}(a))_j =\left\{
\begin{array}{l}
(\varphi_n(a))_j, \qquad j=1,\dots,k \\
(\phi_{n}(a))_j, \qquad j=k+1,\dots,m.
\end{array}
  \right.
\]
where $\varphi_n(a) = U_1^{n-n_0}a  +\sum_{s=n_0}^{n-1} U_1^{n-s-1}g_s(\theta_s(a))$ and $\phi_{n}(a)=-\sum_{s=n}^\infty U_2^{n-s-1}g_s(\theta_s(a))$ are continuous in $a$.  Then, as before, we only consider the first $k$ components of $a$ and define the $k$ dimensional manifold $S_n$ parametrically using the first $k$ components of $\varphi_n(a_1,\dots,a_k)$ and the last $m-k$ components of $\phi_{n}(a_1,\dots,a_k)$. \\
Suppose now that there exists an orbit of \eqref{eqy} such that $y_{n_0}\in S_{n_0}$. Then there exists $a$ such that
\[
y_{n_0} = \theta_{n_0}(a)=a-\sum_{s=n_0}^\infty U_2^{n_0-s-1}g_s(\theta_s(a))
\]  
and $\theta_n(a)$ satisfies \eqref{lime} for $n\geq n_0$. By the uniqueness of the solutions of (\ref{eqy}-\ref{lipg}) and Lemma \ref{lemequiv}, $y_n=\theta_n(a)$ for $n\geq n_0$.   Moreover, by the definition of $S_n$ we have that $y_n\in S_n$ for every $n\geq n_0$ and, by Lemma \ref{lemcruc}, $y_n\to 0$ as $n\to\infty$. Coming back to the $x$ variables we have the thesis. 

To conclude the proof we give the

\begin{proof}[Proof of Lemma \ref{lemcruc}]
With reference to constants introduced in (\ref{lipg},\ref{sti1},\ref{sti2}), let $\max\{\alpha_1,\alpha_2\}<\alpha<1$ and choose $\epsilon$ such that $\epsilon K\left(\frac{1}{1-\alpha_1}+\frac{\alpha_2}{1-\alpha_2}\right)<\frac{1}{2}$. Let $n_0>M_\epsilon$ and $a\in\mathbb{R}^m$ such that $2K|a|<\delta_\epsilon$.

For each $l\geq 0$ consider the sequence $\{\theta_n^{(l)}(a)\}_{n\geq n_0}$ defined by induction on $l$ by
\begin{equation}\label{varcost}
	\left\{
	\begin{aligned}
		&\theta_n^{(0)}(a)=0 \quad \forall n\geq n_0\\
		&\theta_n^{(l+1)}(a)=U_1^{n-n_0}a  +\sum_{s=n_0}^{n-1} U_1^{n-s-1}g_s(\theta_s^{(l)}(a))-\sum_{s=n}^\infty U_2^{n-s-1}g_s(\theta_s^{(l)}(a))\\
	\end{aligned}
	\right.
\end{equation}
Let us prove by induction on $l$ that this sequence is well-defined and for $n\geq n_0$
\begin{equation}\label{lips}
	\begin{split}
		|\theta_n^{(l+1)}(a)-\theta_n^{(l)}(a)|&\leq\frac{K|a|\alpha^{n-n_0}}{2^l}\quad\mbox{and} \\
		|\theta_n^{(l)}(a)|&<\delta_\epsilon
	\end{split}
\end{equation}

The step $l=0$ comes readily from estimate (\ref{sti1}) and the choice of $\alpha$. Notice that the hypothesis $|\theta_n^{(l)}(a)|<\delta_\epsilon$ allows to say that $|\theta_n^{(l+1)}(a)|$ is well defined because the last sum is dominated by a convergent geometric series. Moreover, by the inductive hypothesis we have that for $h=0,\dots,l$,
\begin{equation}\label{stimt}
	\begin{split}
		|\theta_n^{(h+1)}(a)|&\leq |\theta_n^{(h+1)}(a)-\theta_n^{(h)}(a)|+|\theta_n^{(h)}(a)-\theta_n^{(h-1)}(a)|+\dots +|\theta_n^{(0)}(a)|\\
		&\leq \sum_{i=0}^h \frac{K|a|\alpha^{n-n_0}}{2^i}\leq K|a|\sum_{i=0}^{+\infty}\frac{1}{2^i}=2K|a|<\delta_\epsilon
	\end{split}
\end{equation}
remembering the choice of $a$.
So also $|\theta^{(l+2)}_n|$ is well defined and we have
\begin{equation*}
	\begin{split}
		|\theta_n^{(l+2)}(a)-\theta_n^{(l+1)}(a)|\leq & \sum_{s=n_0}^{n-1} K\alpha_1^{n-s-1}|g_s(\theta_s^{(l+1)}(a))-g_s(\theta_s^{(l)}(a))|\\
		& +\sum_{s=n}^\infty K\alpha_2^{s-n+1}|g_s(\theta_s^{(l+1)}(a))-g_s(\theta_s^{(l)}(a))|.     
	\end{split}
\end{equation*}
So, by (\ref{stimt}) we can use (\ref{lipg}) and the inductive hypothesis to get
\begin{equation*}
	\begin{split}
          |\theta_n^{(l+2)}(a)-\theta_n^{(l+1)}(a)|&\leq \frac{K|a|\alpha^{n-n_0}}{2^l}\epsilon K\left(\frac{\alpha_1^{n-n_0}-1}{\alpha_1-1}+\frac{\alpha_2}{1-\alpha_2}\right)\\
          &\leq \frac{K|a|\alpha^{n-n_0}}{2^l}\epsilon K\left(\frac{1}{1-\alpha_1}+\frac{\alpha_2}{1-\alpha_2}\right)
\end{split}
\end{equation*}
that allows us to conclude remembering the choice of $\epsilon$.\\
Let us now define $\Delta_n^{(l)}(a)=|\theta_n^{(l+1)}(a)-\theta_n^{(l)}(a)|$ and note that by (\ref{lips}) and the choice of $|a|$ we have $\Delta_n^{(l)}(a)<\delta_\epsilon\frac{\alpha^{n-n_0}}{2^{l+1}}$. Hence the series $\sum_{l=0}^\infty\Delta_n^{(l)}(a)$ is uniformly and absolutely convergent being dominated by the series $\sum_{l=0}^\infty\delta_\epsilon2^{-(l+1)}$. Consequently, the partial sum
\[
\sum_{l=0}^{\ell-1}(\theta_n^{(l+1)}(a)-\theta_n^{(l)}(a))=\theta_n^{(\ell)}(a)
\]
tends uniformly to a limit $|\theta_n (a)|$ and, by the Weierstrass Test the function $a\mapsto\theta_n(a)$ is continuous for $n\geq n_0$ and $|a|<\frac{\delta_\epsilon}{2K}$. Moreover,
\begin{equation}\label{tendo}
  |\theta_n (a)|=|\theta_n (a)-\theta_n^{(0)}(a)|\leq\sum_{l=0}^\infty\Delta_n^{(l)}(a)< \delta_\epsilon\alpha^{n-n_0}\sum_{l=0}^\infty\frac{1}{2^{l+1}}\leq  \delta_\epsilon \alpha^{n-n_0}.
\end{equation}
Now we want to pass to the limit in (\ref{varcost}). Notice that in order to pass the limit into the last sum we have to use the dominated convergent theorem noticing that for every $s\geq n$
$$
U_2^{n-s-1}g_s(\theta_s^{(l)}(a))\rightarrow U_2^{n-s-1}g_s(\theta_s(a))\quad\mbox{uniformly}
$$ 
and 
$$
|U_2^{n-s-1}g_s(\theta_s^{(l)}(a))|\leq C\alpha^n.
$$
So we are lead to the equation
\begin{equation*}
	\theta_{n}(a)=U_1(n-n_0)a  +\sum_{s=n_0}^{n-1} U_1(n-s-1)g_s(\theta_s(a))
	-\sum_{s=n}^\infty U_2(n-s-1)g_s(\theta_s(a))
\end{equation*}
that is \eqref{lime}. Finally, by (\ref{tendo}) the just defined sequence $\{\theta_n(a)\}_{n\geq n_0}$ tends to $0$ uniformly as $n\to\infty$.

\end{proof}

\subsection{Application to the bouncing ball problem} 
Consider the map $P_{f}(t_n,v_n)$ defined by
\begin{equation}\label{orig}
	\left\{
	\begin{aligned}
		t_{n+1}&=t_{n}+\frac{2}{g}v_{n}-\frac{2}{g}f[t_{n+1},t_{n}]
		\\
		v_{n+1}&=v_{n}+2\dot{f}(t_{n+1})-2f[t_{n+1},t_{n}]
	\end{aligned}
\right.
\end{equation}
and suppose that there exists an unbounded positive orbit $(t_n^*,v_n^*)_{n\geq 0}$ such that for some positive integers $N,V$ and for every $n\geq 0$, 
\begin{equation}\label{unborb}
t^*_{n+N}-t^*_n\in\mathbb{N}\setminus\{0\}, \quad v^*_{n+N}-v_n^*=\frac{g}{2}V, \quad f[t^*_{n+1},t^*_n]=0.
\end{equation}
Note that this is guaranteed if the function $f$ belongs to the family $p_s$ defined in Proposition \ref{oneparfam}.
Let us consider the change of variables 
\begin{equation*}
	\left\{
	\begin{aligned}
		\tau_n&=t_n-t_n^* 
		\\
		\nu_n&=v_n-v_n^*
	\end{aligned}
\right.   
\end{equation*}
Then we have the following expansion of the associated map $P_f(\tau_n,\nu_n)$.
\begin{prop}\label{proplin}
The map $P_f(\tau_n,\nu_n)$ can be written, denoting $x_n=(\tau_n,\nu_n)$ as
\[
x_{n+1}=A_{n+1}x_n+\Omega_n(x_n)   
\]
where
\[
A_n=\left(\begin{array}{cc}
	1 & \frac{2}{g}  \\ 
	2\ddot{f}(t_{n}^*) & 1+\frac{4}{g}\ddot{f}(t_{n}^*)
\end{array}
\right) 
\] 
 and $\Omega_n$ satisfies $\Omega_n(0)=0$ and for every $\epsilon>0$ there exist $\delta>0$ and $M>0$ such that for every $n>M$ and $u,v\in \mathcal{B}_\delta$
 $$
 |\Omega_n(u)-\Omega_n(v)|\leq\epsilon |u-v|
 $$
 with $\mathcal{B}_\delta$ representing the ball centered in $0$ with radius $\delta$. 	
\end{prop}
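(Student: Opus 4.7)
The natural approach is to rewrite \eqref{orig} in the translated variables and read off the linear part. Substituting $t_n = t_n^* + \tau_n$, $v_n = v_n^* + \nu_n$ and using the orbit equations for $(t_n^*, v_n^*)$ together with $f[t_{n+1}^*, t_n^*] = 0$ from \eqref{unborb}, the map becomes
\begin{align*}
\tau_{n+1} &= \tau_n + \tfrac{2}{g}\nu_n - \tfrac{2}{g} F_n(\tau_{n+1}, \tau_n),\\
\nu_{n+1} &= \nu_n + 2\ddot{f}(t_{n+1}^*)\tau_{n+1} + 2 G_n(\tau_{n+1}) - 2 F_n(\tau_{n+1}, \tau_n),
\end{align*}
where $F_n(u,v) := f[t_{n+1}^* + u,\, t_n^* + v]$ and $G_n(\tau):=\dot{f}(t_{n+1}^* + \tau) - \dot{f}(t_{n+1}^*) - \ddot{f}(t_{n+1}^*)\tau$ is the second-order Taylor remainder of $\dot{f}$ at $t_{n+1}^*$. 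The driving observation is that $\Delta_n := t_{n+1}^* - t_n^* = \tfrac{2}{g}v_n^* \to \infty$, so by periodicity and $C^2$ regularity of $f$ both $F_n$ and its partial derivatives are $O(1/\Delta_n)$ uniformly on bounded sets.

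\textbf{Key steps.} First, for $n$ large the first equation is solvable in $\tau_{n+1}$ by a uniform contraction argument (its $\tau_{n+1}$-derivative is $O(1/\Delta_n)$), yielding $\tau_{n+1} = H_n(\tau_n,\nu_n)$ with $H_n(0,0)=0$ and $H_n(\tau_n,\nu_n) = \tau_n + \tfrac{2}{g}\nu_n + O(1/\Delta_n)$, both in value and Lipschitz norm. Substituting into the second equation and splitting
\[
2\ddot{f}(t_{n+1}^*)\tau_{n+1} = 2\ddot{f}(t_{n+1}^*)\bigl(\tau_n + \tfrac{2}{g}\nu_n\bigr) + 2\ddot{f}(t_{n+1}^*)\bigl(H_n - \tau_n - \tfrac{2}{g}\nu_n\bigr)
\]
isolates exactly the matrix $A_{n+1}$ as the linear part and identifies
\[
\Omega_n(\tau_n,\nu_n) = \begin{pmatrix} -\tfrac{2}{g}F_n\\ -\tfrac{4}{g}\ddot{f}(t_{n+1}^*) F_n + 2G_n - 2F_n \end{pmatrix},
\]
with $F_n$ evaluated at $(H_n(\tau_n,\nu_n),\tau_n)$ and $G_n$ at $H_n(\tau_n,\nu_n)$. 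The identity $\Omega_n(0)=0$ then follows from $F_n(0,0)=f[t_{n+1}^*,t_n^*]=0$, $G_n(0)=0$ and $H_n(0,0)=0$.

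\textbf{Lipschitz estimate.} The partial derivatives of $\Omega_n$ split into two families: those coming from $F_n$, which are uniformly $O(1/\Delta_n)$ on $\mathcal{B}_\delta$ and therefore tend to zero as $n\to\infty$; and those coming from $G_n$, whose derivative $\ddot{f}(t_{n+1}^* + \tau) - \ddot{f}(t_{n+1}^*) = O(\delta)$ on $\mathcal{B}_\delta$ uniformly in $n$. Given $\epsilon>0$, I would first choose $\delta$ small so that the $G_n$-contribution to the Lipschitz constant is at most $\epsilon/2$, and then $M$ large so that for $n>M$ the $F_n$-contribution is at most $\epsilon/2$; summing yields the required estimate $|\Omega_n(u)-\Omega_n(v)|\le \epsilon|u-v|$.

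\textbf{Main obstacle.} The one delicate point is the implicit character of the first equation. One needs $H_n$ to be defined on a ball of radius independent of $n$ for all sufficiently large $n$, and its deviation from the affine map $(\tau_n,\nu_n)\mapsto \tau_n + \tfrac{2}{g}\nu_n$ must be small \emph{as a Lipschitz map}, not merely pointwise, so that the chain-rule bookkeeping inside $\Omega_n$ preserves smallness of the Lipschitz constant. This is ensured by a quantitative uniform implicit function argument based on the bounds $\|F_n\|_{C^1}=O(1/\Delta_n)$ and $F_n(0,0)=0$; once $H_n$ satisfies these uniform estimates the remaining calculations are routine.
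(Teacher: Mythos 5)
Your proposal is correct and follows essentially the same route as the paper. Your $F_n$, $G_n$, and $H_n$ correspond to the paper's $\lambda_n$, to the Taylor remainder inside the paper's $r_n$, and to the implicitly defined hitting time $T$ of Lemma~\ref{tec} (translated by $t_{n+1}^*$), and one checks directly that your expression for $\Omega_n$ coincides with the paper's $\bigl(-\frac{2}{g}\lambda_n,\,2r_n-2\lambda_n\bigr)$; the two-scale Lipschitz argument --- shrink $\delta$ for the $G_n$-type terms, enlarge $M$ for the $F_n$-type terms --- is exactly what the paper does via $\|\nabla\lambda_n\|_{L^\infty}\to 0$ together with the mean-value estimate on $\ddot f\circ T-\ddot f(t_{n+1}^*)$. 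The only real difference is that the paper invokes Lemma~\ref{tec} for the existence and asymptotics of $T$, whereas you sketch the uniform contraction argument for $H_n$ from scratch.
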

 
In order to prove Proposition \ref{proplin} let us prove the following technical lemma
\begin{lemma}\label{tec}
The equation 
$$
t_1=t+\frac{2}{g}v-\frac{2}{g}f[t,t_1]
$$
has a unique solution $t_1=T(t,v)\geq t+1$ for large $v$. Moreover $T$ is differentiable on $\{(t,v): v>\bar{v}\}$ and
\begin{equation*}
\begin{split}
T&=t+\frac{2}{g}v+O(\frac{1}{v})\\
\frac{\partial T}{\partial t}&=1+O(\frac{1}{v}),\qquad\frac{\partial T}{\partial v}=\frac{2}{g}+O(\frac{1}{v})
\end{split}
\end{equation*}
as $v\to +\infty$.
\end{lemma}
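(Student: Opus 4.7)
The plan is to rewrite the defining equation as a fixed-point problem
\[
t_1 = \Phi_{t,v}(t_1), \qquad \Phi_{t,v}(t_1) := t + \tfrac{2}{g}v - \tfrac{2}{g}\,f[t,t_1],
\]
and to look for a fixed point in the half-line $J := [t+1,+\infty)$. Since $f$ is real analytic and $1$-periodic, $\|f\|_\infty$ and $\|\dot{f}\|_\infty$ are finite constants. Using
\[
\Phi_{t,v}'(t_1) = -\tfrac{2}{g}\,\frac{\dot f(t_1)(t_1-t) - (f(t_1)-f(t))}{(t_1-t)^2},
\]
one sees $|\Phi_{t,v}'(t_1)|\leq \frac{C}{t_1-t}$ on $J$ for a constant $C$ depending only on $\|f\|_\infty$ and $\|\dot f\|_\infty$. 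Moreover $\Phi_{t,v}(t_1)\in J$ for all $t_1\in J$ as soon as $v$ is large enough (because $\Phi_{t,v}(t_1)\geq t + \tfrac{2}{g}v - \tfrac{4}{g}\|f\|_\infty$). Hence for $v$ larger than some threshold $\bar v$, $\Phi_{t,v}:J\to J$ is a contraction, and the Banach fixed point theorem produces the unique solution $t_1 = T(t,v)\geq t+1$.

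Next I would obtain the zeroth-order expansion directly from the equation. Writing
\[
T(t,v)-t-\tfrac{2}{g}v = -\tfrac{2}{g}\,\frac{f(T)-f(t)}{T-t},
\]
the right-hand side is $O(1/(T-t))$, and since $T-t \geq \tfrac{2}{g}v - \tfrac{4}{g}\|f\|_\infty$ we get both $T-t = \tfrac{2}{g}v + O(1)$ and $T = t + \tfrac{2}{g}v + O(1/v)$.

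For differentiability I would apply the implicit function theorem to
\[
H(t_1,t,v) := t_1 - t - \tfrac{2}{g}v + \tfrac{2}{g}\,\frac{f(t_1)-f(t)}{t_1-t}.
\]
A direct computation using the bounds on $f$ and $\dot f$, together with $T-t = \tfrac{2}{g}v + O(1)$, gives
\[
\frac{\partial H}{\partial t_1}(T,t,v) = 1 + O(1/v^2), \quad \frac{\partial H}{\partial t}(T,t,v) = -1 + O(1/v^2), \quad \frac{\partial H}{\partial v} = -\tfrac{2}{g},
\]
so $\partial H/\partial t_1\neq 0$ for $v$ sufficiently large, and the implicit function theorem applies. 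Solving
\[
\frac{\partial T}{\partial t} = -\frac{\partial H/\partial t}{\partial H/\partial t_1}, \qquad \frac{\partial T}{\partial v} = -\frac{\partial H/\partial v}{\partial H/\partial t_1},
\]
yields at once $\partial T/\partial t = 1 + O(1/v)$ and $\partial T/\partial v = 2/g + O(1/v)$.

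The main subtlety, and really the only one, is controlling the divided difference $f[t,t_1]$ and its derivatives uniformly in $t_1-t$: the denominator $(t_1-t)^2$ appearing in $\Phi_{t,v}'$ and in $\partial H/\partial t_1$ could in principle be dangerous, but on the half-line $J$ we have $t_1 - t \geq 1$, and after passing to the solution we have $t_1-t \sim \tfrac{2}{g}v$, which is exactly what makes all the error terms $O(1/v)$. Aside from this bookkeeping the argument is entirely standard.
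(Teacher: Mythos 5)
Your argument is correct but routed differently from the paper's: the paper simply cites an earlier lemma of the author (Lemma~1 of \cite{maro3}) for existence, uniqueness, and differentiability of $T$, and derives the leading-order expansion by setting $\Delta = T-t$, converting the defining equation into the quadratic $\Delta^2 - \tfrac{2}{g}v\Delta + \tfrac{2}{g}(f(T)-f(t)) = 0$, and selecting the branch $\Delta_+$ consistent with $\Delta \geq 1$; the derivative expansions then come from implicit differentiation, as in your write-up. Your Banach fixed-point construction replaces the citation and makes the lemma self-contained, and your direct bootstrap from
\[
T - t - \tfrac{2}{g}v = -\tfrac{2}{g}\,\frac{f(T)-f(t)}{T-t}
\]
avoids the quadratic formula entirely. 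Both arguments turn on the same mechanism---once $T-t$ is of order $v$, the divided difference and its derivatives become $O(1/v)$---so they are comparable in spirit, but yours has the advantage of not outsourcing the existence step.

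Two small points to tighten. First, $\Phi_{t,v}$ is not literally a contraction on all of $J=[t+1,\infty)$: the bound $|\Phi'_{t,v}(t_1)|\le C/(t_1-t)$ is small only when $t_1-t$ is large, and near the left endpoint $t_1-t=1$ the Lipschitz constant may exceed $1$. The fix is immediate: $\Phi_{t,v}$ maps all of $J$ into the interval $J' := [\,t+\tfrac{2}{g}v-\tfrac{4}{g}\|f\|_\infty,\; t+\tfrac{2}{g}v+\tfrac{4}{g}\|f\|_\infty\,]$, on which your derivative bound gives contraction factor $O(1/v)$; uniqueness in $J$ then follows since any fixed point in $J$ necessarily lies in $J'$. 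Second, the intermediate claim $\partial H/\partial t_1 = 1 + O(1/v^2)$ (and similarly for $\partial H/\partial t$) overstates the decay: from
\[
\frac{\partial H}{\partial t_1}(T,t,v) = 1 + \frac{2}{g}\left(\frac{\dot f(T)}{T-t} - \frac{f(T)-f(t)}{(T-t)^2}\right)
\]
the first term in the parenthesis is only $O(1/v)$. This does not affect your conclusion---the quotients still yield $\partial T/\partial t = 1 + O(1/v)$ and $\partial T/\partial v = 2/g + O(1/v)$---but the stated $O(1/v^2)$ should read $O(1/v)$.
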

\begin{proof}
The first part has been proved in \cite[Lemma 1]{maro3}, so let us prove the validity of the asymptotic expansions. Let $\Delta=\Delta(t,v)=T-t$ so that, from the definition of $T$, we have
$$
\Delta=\frac{2}{g}v-\frac{2}{g}\frac{f(T)-f(t)}{\Delta}
$$
that is equivalent to the quadratic equation 
$$
\Delta^2-\frac{2}{g}\Delta v+\frac{2}{g}(f(T)-f(t))=0
$$
with roots
$$
\Delta_\pm = \frac{1}{g}v\pm\sqrt{\frac{1}{g^2}v^2-\frac{2}{g}(f(T)-f(t))}.
$$
Then $\Delta_+\to +\infty$ and $\Delta_-\to 0$ as $v\to +\infty$. Since we know that $\Delta\geq 1$, then it must coincide with the positive branch for large $v$. So
$$
\Delta = \frac{1}{g}v +\sqrt{\frac{1}{g^2}v^2-\frac{2}{g}(f(T)-f(t))}= \frac{2}{g}v+O(\frac{1}{v})
$$
that is the expansion for $T$.

We can obtain the expansions for the partial derivatives differentiating the formula
$$
T=t+\frac{2}{g}v-\frac{2}{g}\frac{f(T)-f(t)}{T-t}
$$
and remembering the previous expansion of $T$.
\end{proof}

 Now we are ready for the
\begin{proof}[Proof of Proposition \ref{proplin}]
	Using the variables $(\tau_n,\nu_n)$ the map $P_f(\tau_n,\nu_n)$ takes the from
	\begin{equation}\label{modifi}
          \left\{
		\begin{array}{ll}
			\tau_{n+1}=\tau_{n}+\frac{2}{g}\nu_{n}-\frac{2}{g}\lambda_n(\tau_n,\nu_n)
			\\
			\nu_{n+1}=\nu_{n}+2\phi_n(\tau_n,\nu_n)-2\dot{f}(t_{n+1}^*)-2\lambda_n(\tau_n,\nu_n)
		\end{array}
                \right.
	\end{equation}
	where, recalling that $f[t^*_{n+1},t^*_n]=0$,
        $$
	\lambda_n(\tau,\nu)=f[T(\tau + t^*_{n},\nu + v_n^*),\tau+t_n^*]
	$$
and
	$$
	\phi_n(\tau,\nu)=\dot{f}(T(\tau + t^*_{n},\nu + v_n^*)).
	$$
        
	Rewriting (\ref{modifi}) as a perturbation of the linear map induced by the matrix $A_n$, we get 
	\begin{equation*}
          \left\{
		\begin{array}{ll}
			\tau_{n+1}=\tau_{n}+\frac{2}{g}\nu_{n}-\frac{2}{g}\lambda_n(\tau_n,\nu_n)
			\\
			\nu_{n+1}=\nu_{n}+2\ddot{f}(t_{n+1}^*)(\tau_n + \frac{2}{g}\nu_n)+2r_n(\tau_n,\nu_n)-2\lambda_n(\tau_n,\nu_n)
		\end{array}
                \right.
	\end{equation*}
        where 
	$$
	r_n(\tau,\nu)=\phi_n(\tau,\nu)-\dot{f}(t_{n+1}^*)-\ddot{f}(t_{n+1}^*)(\tau + \frac{2}{g}\nu).
	$$    
	By the regularity of $f$, the functions $\lambda_n$ and $r_n$ are well defined and $C^1$ in a common neighbourhood $\mathcal{U}$ of the origin. Moreover, as $T(t_n^*,v_n^*)=t_{n+1}^*$, we have that $\lambda_n(0,0)=r_n(0,0)=0$. From the asymptotic estimates of Lemma \ref{tec} we deduce that 
	\begin{equation}\label{frt}
		T(\tau + t^*_{n},\nu + v_n^*)-t_{n+1}^*-\tau-\frac{2}{g}\nu=O(\frac{1}{v_n^*})\quad\mbox{ as }n\to +\infty 
	\end{equation}
	uniformly in $\mathcal{U}$. As a consequence, remembering that $t^*_{n+1}-t^*_n=\frac{2}{g}v_n^*\to\infty$ as $n\to\infty$, we have
	$$
	T(\tau + t^*_{n},\nu + v_n^*)-t_{n}^*\to\ +\infty\quad \mbox{ as }n\to +\infty
	$$
	uniformly in $\mathcal{U}$. From this it is easy to verify that
	$$
	||\nabla\lambda_n||_{L^\infty(\mathcal{U})}\to 0\quad\mbox{ as }n\to +\infty. 
	$$
	Now a simple application of the mean value theorem gives, for every $\epsilon$, the existence of $M>0$ such that
	\begin{equation}\label{lamb}
		|\lambda_n(x)-\lambda_n(y)|<\epsilon |x-y|
	\end{equation}
	for every $x,y \in \mathcal{U}$ and $n>M$. To estimate $\nabla r_n$ we have to proceed with more care. We will consider $\frac{\partial r_n}{\partial\tau}$, being the other case similar. We have
	\begin{equation*}
		\begin{split}
			\frac{\partial r_n}{\partial\tau}=&\ddot{f}(T(\tau + t^*_{n},\nu + v_n^*))\frac{\partial T}{\partial\tau}(\tau + t^*_{n},\nu + v_n^*) -\ddot{f}(t_{n+1}^*) \\
			& =\ddot{f}(T(\tau + t^*_{n},\nu + v_n^*)-\ddot{f}(t_{n+1}^*)+O(\frac{1}{v_n^*})\quad \mbox{ as }n\to +\infty
		\end{split}
	\end{equation*}
	where we have used the estimate of Lemma \ref{tec}. Now, the mean value theorem implies that
	\begin{equation*}
		\begin{split}
			|\ddot{f}(T(\tau + t^*_{n},\nu + v_n^*)-\ddot{f}(t_{n+1}^*)|&\leq ||\dddot{f}||_\infty |T-t_{n+1}^*|\\
			&=||\dddot{f}||_\infty |\tau+\frac{2}{g}\nu|+O(\frac{1}{v_n^*})
		\end{split}
	\end{equation*}
	where we have used (\ref{frt}). A similar estimates holds for $\frac{\partial r_n}{\partial\nu}$, so, for every $\epsilon$ we can eventually restrict the neighbourhood $\mathcal{U}$ such that
	$$
	||\nabla r_n||_{L^\infty(\mathcal{U})}<\epsilon \quad\mbox{ as }n\to +\infty. 
	$$ 
	As before, using the mean value theorem we can find for every $\epsilon$, the existence of $M>0$ such that
	\begin{equation*}
		|r_n(x)-r_n(y)|<\epsilon |x-y|
	\end{equation*}
	for every $x,y \in \mathcal{U}$ and $n>M$. This last estimate, together with (\ref{lamb}), allows to conclude letting $x_n=(\tau_n,\nu_n)$ and $\Omega_n(x_n)=(-\frac{2}{g}\lambda_n(x_n),2r_n(x_n)-2\lambda_n(x_n))$.
\end{proof} 
 
 Note that we cannot apply Theorem \ref{cl} to the map $P_f(\tau_n,\nu_n)$ since the matrix $A_n$ coming from Proposition \ref{proplin} is not constant. However, remembering the properties of the unbounded orbit $(t_n^*,v_n^*)$ we can consider the $N$-th iterate $P^N_f(\tau,\nu)$. More precisely,

 \begin{prop}\label{linN}  
 The map $P^N_f(\tau_0,\nu_0)$ can be written, denoting $y_m=(\tau_{mN},\nu_{mN})$ as
 \[
 y_{m+1}=Ay_m+R_m(y_m)   
 \]
 where
 \[
 A=A_{0}A_{N-1}\dots A_{1} 
 \]
 is independent on $m$ 
 and $R_m$ satisfies $R_m(0)=0$ and for every $\epsilon>0$ there exist $\delta>0$ and $M>0$ such that for every $n>M$ and $u,v\in \mathcal{B}_\delta$
 $$
 |R_m(u)-R_m(v)|\leq\epsilon |u-v|
 $$
 with $\mathcal{B}_\delta$ representing the ball centered in $0$ with radius $\delta$. 	
\end{prop}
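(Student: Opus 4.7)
\medskip

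\noindent\textbf{Proof plan for Proposition \ref{linN}.} The plan is to simply iterate the one-step expansion of Proposition \ref{proplin} $N$ times, and to exploit the arithmetic structure of the unbounded orbit to extract a constant linear part.

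First I would observe that the matrices $A_n$ are periodic along the reference orbit: since $f$ (hence $\ddot f$) is $1$-periodic and $t^*_{n+N}-t^*_n\in\mathbb{N}$ by \eqref{unborb}, we have $\ddot f(t^*_{n+N})=\ddot f(t^*_n)$, so $A_{n+N}=A_n$ for every $n\geq 0$. In particular $A_{mN+j}=A_j$ for all $m\geq 0$ and $j\in\{0,\dots,N-1\}$, and the $N$-step linear part
\[
A_{(m+1)N}A_{(m+1)N-1}\cdots A_{mN+1}=A_N A_{N-1}\cdots A_1=A_0A_{N-1}\cdots A_1=:A
\]
does not depend on $m$, using $A_N=A_0$.

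Next I would iterate Proposition \ref{proplin} $N$ times starting from $y_m=x_{mN}$. Writing the iteration explicitly,
\[
x_{mN+j+1}=A_{j+1}x_{mN+j}+\Omega_{mN+j}(x_{mN+j}), \qquad j=0,\dots,N-1,
\]
and unrolling yields
\[
y_{m+1}=A\,y_m+R_m(y_m),
\]
where $R_m$ is a finite sum of terms of the form
\[
A_N A_{N-1}\cdots A_{j+2}\,\Omega_{mN+j}\bigl(x_{mN+j}(y_m)\bigr),\qquad j=0,\dots,N-1,
\]
with $x_{mN+j}(y_m)$ understood as the explicit polynomial-in-$\Omega$ expression obtained from the iteration (and with the product of $A$'s empty when $j=N-1$). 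Since $\Omega_n(0)=0$ for every $n$, the zero trajectory is invariant and $R_m(0)=0$.

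The remaining point, and the only delicate one, is the Lipschitz estimate on $R_m$ at small arguments. Let $C:=\max_{0\le j\le N-1}|A_j|$, which is finite since $\ddot f$ is bounded. Fix $\epsilon>0$ and apply Proposition \ref{proplin} to get $\delta'>0$ and $M'$ such that $|\Omega_n(u)-\Omega_n(v)|\le\epsilon'|u-v|$ for $n>M'$ and $u,v\in\mathcal{B}_{\delta'}$, where $\epsilon'>0$ will be chosen (depending only on $C$, $N$, and $\epsilon$). Because each single step is a bounded linear map plus an arbitrarily small Lipschitz perturbation, there exists $\delta>0$ (depending on $\delta'$, $C$, $N$) such that if $|y_m|\le\delta$ then $|x_{mN+j}(y_m)|<\delta'$ for all $j=0,\dots,N-1$, and moreover the maps $y_m\mapsto x_{mN+j}(y_m)$ are Lipschitz with some constant $L_j\le (C+\epsilon')^j$. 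Choosing $M\ge M'$ sufficiently large and chaining the $N$ Lipschitz bounds, one obtains $|R_m(u)-R_m(v)|\le N C^{N-1}\epsilon'(C+\epsilon')^{N-1}|u-v|$, and finally selecting $\epsilon'$ small enough turns the right-hand side into $\epsilon|u-v|$, which is the required estimate.

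The main (mild) obstacle I expect is the bookkeeping needed to guarantee that all intermediate iterates $x_{mN+j}(y_m)$ remain in the common ball $\mathcal{B}_{\delta'}$ on which the Lipschitz bound for $\Omega_n$ is valid; this is handled as above by first shrinking $\delta$ and then taking $m$ large, using only the uniform bound $|A_n|\le C$ and the already established smallness of $\Omega_n$.
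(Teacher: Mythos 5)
Your proposal is correct and takes essentially the same approach as the paper: note the periodicity $A_{n+N}=A_n$ forced by $t^*_{n+N}-t^*_n\in\mathbb{N}$, unroll the one-step expansion of Proposition~\ref{proplin} $N$ times, and deduce the small-Lipschitz property of $R_m$ from that of the $\Omega_n$ together with the uniform bound on the $A_n$. Your write-up is in fact more explicit than the paper's, which only asserts that the estimate on $R_m$ ``follows'' --- you supply the chained Lipschitz bound and the bookkeeping ensuring the intermediate iterates stay in the ball where the $\Omega_n$ estimate holds.
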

\begin{proof}
	From a direct computation from Proposition \ref{proplin}
	\begin{align*}
	y_{m+1}&=P^N_f(x_{mN})= A_{(m+1)N}A_{mN+N-1} \cdots A_{mN+1}x_{mN}+R_m(x_{mN}) \\
	&= A_{(m+1)N}A_{mN+N-1} \cdots A_{mN+1}y_{m}+R_m(y_{m}) 
	\end{align*}
	 where the remaining $R_m$ is a finite composition of the components of $\Omega_i$ and and $A_i$, $i=mN,\dots,(m+1)N$ and the required estimate on $R_m$ follows from the corresponding estimate on $\Omega_i$ and the fact that all the entries of the matrices $A_i$ are bounded. Finally, since $t^*_{n+N}-t^*_n\in\mathbb{N}$ for every $n\geq 0$ and $f$ is one periodic, $\ddot{f}(t^*_n) = \ddot{f}(t^*_{n+N})$ so that $A_n=A_{n+N}$ for every $n\geq 0$. This implies that for every $m\geq 0$ the matrix
	 \[
	 A_{(m+1)N}A_{mN+N-1} \cdots A_{mN+1}=A_{0}A_{N-1} \cdots A_{1}:=A
	 \]
	is independent on $m$. 
	\end{proof}

We are ready to apply Theorem \ref{cl} to the map $P_f^N$.

\begin{prop}\label{contu}
Suppose that
\[
|\Tr(A_{N-1}\dots A_1A_0 )|>2.
\]
with $A_n$ defined in Proposition \ref{proplin}. Then there exists a one dimensional continuum $\tilde{S}\ni(t_0^*,v_0^*)$ of initial data leading to unbounded solutions of the map $P_f$.
\end{prop}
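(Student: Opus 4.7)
The plan is to apply Theorem \ref{cl} to the expansion of $P_f^N$ given by Proposition \ref{linN}, and then translate the resulting stable manifold back to the original map $P_f$.

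First I would check that the constant matrix $A=A_{N-1}\cdots A_1A_0$ is hyperbolic. A direct computation shows that each $A_n$ in Proposition \ref{proplin} has determinant $1$ (consistent with the symplectic nature of $P_f$), so $A\in SL(2,\mathbb{R})$. Combined with the hypothesis $|\Tr A|>2$, this forces $A$ to have two real eigenvalues $\lambda_1,\lambda_2$ with $\lambda_1\lambda_2=1$ and $|\lambda_1|<1<|\lambda_2|$; in particular $k=1$ in the terminology of Theorem \ref{cl}. Proposition \ref{linN} furnishes the remaining hypotheses of Theorem \ref{cl} for the discrete system $y_{m+1}=Ay_m+R_m(y_m)$, with $y_m=(\tau_{mN},\nu_{mN})$. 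Thus Theorem \ref{cl} provides an integer $m_0$ and a one-dimensional topological manifold $S_{m_0}\ni 0$ such that any orbit of $P_f^N$ in the $(\tau,\nu)$ variables starting on $S_{m_0}$ converges to $0$ as $m\to +\infty$.

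Next I would translate $y_m\to 0$ back to the physical variables. The relation $\nu_{mN}=v_{mN}-v^*_{mN}\to 0$ together with $v^*_{mN}=v^*_0+m\,\tfrac{g}{2}V\to +\infty$ yields $v_{mN}\to +\infty$ along the subsequence $n=mN$. To upgrade this to $v_n\to +\infty$ along every $n$, I would use that for $v>\bar v$ the map $P_f$ is a continuous embedding: for each fixed $j\in\{0,\dots,N-1\}$, applying $P_f^j$ to $(t_{mN},v_{mN})\to (t^*_{mN},v^*_{mN})$ (in the sense that $(\tau_{mN+j},\nu_{mN+j})\to 0$ as $m\to\infty$) gives $v_{mN+j}=v^*_{mN+j}+\nu_{mN+j}\to +\infty$ since $v^*_{mN+j}\to +\infty$ and the perturbation stays bounded. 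Hence every bouncing motion starting at the step $m_0N$ on $S_{m_0}$ is unbounded in the original sense.

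Finally, to obtain a continuum of initial conditions \emph{at time zero} containing $(t^*_0,v^*_0)$, I would pull $S_{m_0}$ back along $P_f^{-m_0N}$. Since $P_f$ is an embedding for $v>\bar v$ and the reference orbit lies in that region for all $n$, a neighbourhood of $(t^*_{m_0N},v^*_{m_0N})$ is the forward image of a neighbourhood of $(t^*_0,v^*_0)$ under $P_f^{m_0N}$, and we may define
\[
\tilde S := (P_f^{m_0N})^{-1}(S_{m_0}).
\]
Then $\tilde S$ is a one-dimensional topological manifold, it contains $(t^*_0,v^*_0)$ (because $0\in S_{m_0}$ corresponds to the reference orbit), and by construction any $(t,v)\in\tilde S$ produces a bouncing motion whose $N$-th iterate satisfies the hypothesis of the preceding paragraph, hence is unbounded.

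The main technical point I expect to dwell on is the third step: justifying that $y_m\to 0$ not only along multiples of $N$ but also along the intermediate indices. This requires using the asymptotic estimates of Lemma \ref{tec} (already used to derive Proposition \ref{proplin}) to control $P_f$ on a uniform neighbourhood of the reference orbit as $n\to +\infty$; the hyperbolicity of $A$ gives the smallness at multiples of $N$, while the fact that $\Omega_n$ is a small Lipschitz perturbation of $A_n$ with uniformly bounded entries propagates this smallness through the $N-1$ intermediate steps.
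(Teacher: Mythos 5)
Your proposal is correct and follows essentially the same route as the paper: verify that $A$ has $\det A=1$ and $|\Tr A|>2$, hence is hyperbolic with one eigenvalue inside and one outside the unit circle, apply Theorem \ref{cl} to the $N$-th iterate via Proposition \ref{linN}, and pull the resulting one-dimensional stable manifold back by $(P_f^N)^{-m_0}$. Note only that Proposition \ref{linN} actually produces $A=A_0A_{N-1}\cdots A_1$ rather than $A_{N-1}\cdots A_1A_0$, so you should invoke the cyclic invariance of the trace (as the paper does) to pass from the stated hypothesis to hyperbolicity of $A$; conversely, your explicit discussion of the intermediate indices $mN+j$ fills in a small step that the paper leaves implicit.
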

\begin{proof}
With reference to Proposition \ref{linN}, we have that $\det A=1$ and
\[
|\Tr(A)| =|\Tr(A_{0}A_{N-1}\dots A_{1})| =|\Tr(A_{N-1}\dots A_1 A_{0})|>2
\]
using the fact that the trace of a product of matrices is invariant under cyclic permutations. Since $A$ is a $2\times 2$ matrix, it is hyperbolic and the eigenvalues $\lambda_1,\lambda_2$ satisfy $0<|\lambda_2|<1<|\lambda_2|$. Hence we can apply Theorem \ref{cl} to the map $P^N(\tau_0,\nu_0)$ and find for every $m\geq m_0$ large enough a one dimensional continuum $S_{m}$ such that if $(\tau_{m_0N},\nu_{m_0N})\in S_{m_0}$ then $(\tau_{mN},\nu_{mN})\in S_{m}$ and $(\tau_{mN},\nu_{mN})\to(0,0)$ as $m>m_0$ tends to $+\infty$. \\
Coming back to the variables $(t,v)$ we have that for every $m\geq m_0$  there exists a continuum $\tilde{S}_{m}$ such that if $(t_{m_0 N},v_{m_0 N})\in\tilde{S}_{m_0} $ then $(t_{m N},v_{m N})\in\tilde{S}_{m} $ and
\[
t_{mN}-t_{mN}^*\to 0,\qquad v_{mN}-v_{mN}^*\to 0 \quad\mbox{ as }m\to\infty.
\]  
In order to construct the continuum $\tilde{S}$ of initial conditions, we note that for every $n>0$, $P_f^{-n}(t_n,v_n)$ is well defined in a sufficiently small neighbourhood of the unbounded orbit $(t_n^*,v_n^*)$ and the corresponding $v$-component is larger that $\bar{v}$. \\
 Therefore, if
$(t_0,v_0)\in(P_f^N)^{-m_0}(\tilde{S}_{m_0}):=\tilde{S}$ then   
we get the thesis remembering the properties of $(t_n^*,v_n^*)$ in \eqref{unborb}.       
\end{proof}
We finally show that this applies to the bouncing motions corresponding to the family $p_s$ defined in Proposition \ref{oneparfam}, concluding the proof of Theorem \ref{main} and Corollary \ref{main2}. 
\begin{prop}
Consider the family of functions $p_s(t), s \in I$ defined in Proposition \ref{oneparfam}. Then there exists a real open interval $\tilde{I}\subset I$ such that for every $s\in \tilde{I}$ there exists a one dimensional continuum $\tilde{S}_s$ such that the orbits of $P_{p_s}$ starting in $\tilde{S}_s$ are unbounded. Moreover the continua $\tilde{S}_s$ intersect in the unbounded orbit found in Proposition \ref{oneparfam}.    
\end{prop}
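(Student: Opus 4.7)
The plan is to reduce this to verifying the single hyperbolicity hypothesis $|\Tr(A_1 A_0)| > 2$ of Proposition \ref{contu} on an open subinterval $\tilde I \subset I$. Once this bound is established, Proposition \ref{contu} applied to each $P_{p_s}$ directly produces the one dimensional continuum $\tilde S_s \ni (t_0^*, v_0^*)$ of initial data with unbounded orbits. The intersection claim is then automatic: by Proposition \ref{oneparfam} the values $t_0^* = 0$, $t_1^* = 5/12 + k$ and $v_0^* = g(t_1^* - t_0^*)/2$ do not depend on $s$, and neither does the entire unbounded orbit (the associated $GS$-iteration visits only the two angles $t_0^*, t_1^* \pmod 1$, where the required values of $\dot p_s$ are forced by conditions 2' and 4'). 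Hence $(t_0^*, v_0^*)$ lies in every $\tilde S_s$ and its forward iterates under $P_{p_s}$ reproduce the $s$-independent unbounded orbit.

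The analytic core is to check the trace inequality for one specific $s$ and invoke continuity. With $N = 2$ and $c_j(s) := \ddot p_s(t_j^*)$, a direct computation from the matrices $A_n$ of Proposition \ref{proplin} yields
\[
\Tr(A_1 A_0) \;=\; 2 \,+\, \frac{8}{g}\bigl(c_0(s) + c_1(s)\bigr) \,+\, \frac{16}{g^2}\, c_0(s)\, c_1(s).
\]
Each $c_j(s)$ is obtained by evaluating $\ddot p_s$ at $t = 0$ and $t = 5/12$ using the coefficients \eqref{coeff} together with $\sin(5\pi/6) = \tfrac{1}{2}$, $\cos(5\pi/6) = -\tfrac{\sqrt 3}{2}$, $\sin(5\pi/3) = -\tfrac{\sqrt 3}{2}$, $\cos(5\pi/3) = \tfrac{1}{2}$; it is thus an affine function of $s$ whose coefficients involve $\pi$ and $\sqrt 3$. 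Consequently $\Tr(A_1 A_0)$ is an explicit quadratic polynomial in $s$ depending analytically on $s$.

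To finish, I would evaluate this quadratic at the sample value $\bar s \in I$ from Proposition \ref{oneparfam} (or at the near-optimal value recorded in Remark \ref{numeric}) and verify that $|\Tr(A_1 A_0)(\bar s)| > 2$. By continuity the strict inequality persists on an open neighbourhood of $\bar s$, and after possibly shrinking it so that the hypotheses of Proposition \ref{oneparfam} (namely $\max \dot p_s < g/4$ and $a_1(s) \neq 0$, which guarantees minimal period $1$) still hold, one obtains the desired $\tilde I$. For every $s \in \tilde I$, Proposition \ref{contu} then yields the continuum $\tilde S_s$, completing the proof of the proposition and hence of Theorem \ref{main} and Corollary \ref{main2}.

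The main obstacle is the concrete verification of $|\Tr(A_1 A_0)(\bar s)| > 2$: although the trace is only a quadratic polynomial in $s$, its coefficients are explicit combinations of $\pi$, $\sqrt 3$ and $g$ coming from \eqref{coeff}, so ensuring strict inequality at $\bar s$ requires either a symbolic check or a rigorous numerical estimate (interval arithmetic would suffice). Everything else in the argument is a direct invocation of Proposition \ref{contu} combined with the $s$-independence of $(t_0^*, v_0^*)$.
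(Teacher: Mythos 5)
Your proposal is correct and follows essentially the same route as the paper: reduce the statement to the hyperbolicity hypothesis $|\Tr(A_1A_0)|>2$ of Proposition \ref{contu}, compute $\Tr(A_1A_0)=2+\tfrac{8}{g}\bigl(\ddot p_s(t_0^*)+\ddot p_s(t_1^*)\bigr)+\tfrac{16}{g^2}\ddot p_s(t_0^*)\ddot p_s(t_1^*)$, check it at a sample parameter, and extend by continuity, while observing that the base orbit and its initial point $(t_0^*,v_0^*)$ are $s$-independent so all the continua pass through it. The only small cosmetic difference is that the paper factors $\ddot p_s = g\,\tilde p_s$ and then verifies the two scalar sign conditions $\tilde p_{0.006}(t_0^*)>0$ and $\tilde p_{0.006}(t_1^*)>0$ (which immediately force the bracket to be positive and hence $T_s>2$, and are simpler to check than the full quadratic), whereas you propose to evaluate the trace itself at $\bar s$; both are direct computations from \eqref{coeff} and equally valid.
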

\begin{proof}
	By Proposition \ref{oneparfam} there exists an unbounded orbit, independent on $s$ satisfying \eqref{unborb} for $N=2$ with $t_0^*=0$ and $t_1^*=5/12+k$. Hence, we only need to check that for every $s\in I$
	\begin{align*}
	T_s=|\Tr(A_1A_0)|&=\left|\Tr \left(\begin{array}{cc}
		1 & \frac{2}{g}  \\ 
		2\ddot{p}_s(t_{1}^*) & 1+\frac{4}{g}\ddot{p}_s(t_{1}^*)
	\end{array}
	\right) \left(\begin{array}{cc}
		1 & \frac{2}{g}  \\ 
		2\ddot{p}_s(t_{0}^*) & 1+\frac{4}{g}\ddot{p}_s(t_{0}^*)
	\end{array}
	\right)\right| \\
	&= \left|2+\frac{8}{g}\left(\ddot{p}_s(t_{0}^*)+\ddot{p}_s(t_{1}^*)+\frac{2}{g}\ddot{p}_s(t_{0}^*)\ddot{p}_s(t_{1}^*)\right)\right|>2
	\end{align*} 
We recall that from the proof of Proposition \ref{oneparfam}, the family $p_s(t)$ has the form, for $s\in I$ 
\[
	p_s(t) = a_1(s) \sin(2\pi t)+b_1(s) \cos(2\pi t) + a_2(s) \sin(4\pi t) + b_2(s) \cos(4\pi t),
\] 
where the coefficients are defined in \eqref{coeff} and $0.006\in I$. Moreover, $\ddot{p}_s(t)=g\tilde{p}_s(t)$ with $\tilde{p}_s(t)$ independent on $g$. Hence,
\[
T_s=\left|2+8\left(\tilde{p}_s(t_{0}^*)+\tilde{p}_s(t_{1}^*)+2\tilde{p}_s(t_{0}^*)\tilde{p}_s(t_{1}^*)\right)\right|
\] 
and from a direct computation we get that
\[
\tilde{p}_{0.006}(t_0^*)>0, \qquad \tilde{p}_{0.006}(t_1^*)>0.
\]
Finally, by continuity we can find a new interval $\tilde{I}\subset I$ such that for every $s\in \tilde{I}$
\[
\tilde{p}_{s}(t_0^*)>0, \qquad \tilde{p}_{s}(t_1^*)>0.
\]
Hence $T_s>2$ for $s\in \tilde{I}$ and we can conclude applying Proposition \ref{contu}.  	
	
\end{proof}

\begin{remark}
	With reference to Remark \ref{numeric}, it follows that $T_s>2$ also for the optimal numerical value 
	\[
	\bar{s}=0.009569094523943.
	\]
	In this case, 
	\[
\tilde{p}_s(t_{0}^*)+\tilde{p}_s(t_{1}^*)+2\tilde{p}_s(t_{0}^*)\tilde{p}_s(t_{1}^*)=1.186500669840734>0.
\]
\end{remark}

\vspace{1cm} 

\noindent \textbf{Acknowledgements.} This work is part of the author activity
within the DinAmicI community (www.dinamici.org) and the Gruppo Nazionale di Fisica Matematica, INdAM, Italy. The author is also grateful to the referees whose suggestions significantly improved the final version of the paper.


\end{document}